%
%
\documentclass[11pt]{amsart}
\addtolength{\hoffset}{-0.5in}
\addtolength{\voffset}{-0.8in}
\addtolength{\textwidth}{1.2in}
\addtolength{\textheight}{1.4in}
\usepackage[nohug]{diagrams}
\usepackage{amscd}
\usepackage[mathscr]{eucal}
\usepackage{amsfonts}
\usepackage{amsmath}
\usepackage{amsthm}
\usepackage{amssymb}
\usepackage{latexsym}
\usepackage{tabularx,array}
\newfont{\msam}{msam10}

\newtheorem{theorem}[]{Theorem}
\newtheorem{proposition}[]{Proposition}
\newtheorem{corollary}[]{Corollary}
\newtheorem{lemma}[]{Lemma}
\theoremstyle{definition}

\newtheorem{remark}[]{Remark}

\pagestyle{plain}

\let\nc\newcommand

\nc{\la}{\label}

\def\bthm{\begin{theorem}}
\def\ethm{\end{theorem}}
\def\blemma{\begin{lemma}}
\def\elemma{\end{lemma}}
\def\bproof{\begin{proof}}
\def\eproof{\end{proof}}
\def\bprop{\begin{proposition}}
\def\eprop{\end{proposition}}
\def\bcor{\begin{corollary}}
\def\ecor{\end{corollary}}

\def\ms#1{\mathcal{#1}}

\def\Z{\mathbb{Z}}
\def\R{\mathbb{R}}
\def\H{\mathbb{H}}

\def\O{\mathcal{O}}

\def\Q{\mathbb{Q}}

\def\N{\mathbb{N}}

\def\D{{\mathscr D}}

\def\RR{\mathcal{R}}

\def\CM{{\sf CM}}

\def\AA{{\mathcal A}}

\def\m{\mathfrak{m}}
\def\c{\mathbb{C}}

\def\CC{\mathcal{C}}
\def\bCC{\bar{\mathcal C}}

\nc{\Hom}{{\rm{Hom}}}
\nc{\Ext}{{\rm{Ext}}}
\nc{\HOM}{\underline{\rm{Hom}}}
\nc{\EXT}{\underline{\rm{Ext}}}
\nc{\TOR}{\underline{\rm{Tor}}}
\nc{\End}{{\rm{End}}}
\nc{\GL}{{\rm{GL}}}
\nc{\PGL}{{\rm{PGL}}}
\nc{\SL}{{\rm{SL}}}
\nc{\PSL}{{\rm{PSL}}}
\nc{\Rep}{{\rm{Rep}}}
\nc{\ad}{{\rm{ad}}}
\nc{\dlim}{\varinjlim}
\newcommand{\Lotimes}{\stackrel{\boldsymbol{L}}{\otimes}}

\newcommand{\Mod}{{\tt{Mod}}}

\newcommand{\Tor}{{\rm{Tor}}}

\newcommand{\Pic}{{\rm{Pic}}}
\newcommand{\Aut}{{\rm{Aut}}}
\newcommand{\Auteq}{{\rm{Auteq}}}

\newcommand{\rk}{{\rm{rk}}}

\newcommand{\Ker}{{\rm{Ker}}}

\newcommand{\im}{{\rm{Im}}}

\newcommand{\ei}{e_{\infty}}

\newcommand{\into}{\,\,\hookrightarrow\,\,}

\newcommand{\onto}{\,\,\twoheadrightarrow\,\,}

\newcommand{\rrdaha}{\varepsilon H \varepsilon}
\newcommand{\lldaha}{\varepsilon' H \varepsilon'}
\newcommand{\rldaha}{\varepsilon H \varepsilon'}
\newcommand{\lrdaha}{\varepsilon' H \varepsilon}

\def\x{x^{\pm 1}}
\def\y{y^{\pm 1}}

\begin{document}

\title{The Picard Group of a Noncommutative Algebraic Torus}
\author{Yuri Berest}
\address{Department of Mathematics,
 Cornell University, Ithaca, NY 14853-4201, USA}
\email{berest@math.cornell.edu}
\author{Ajay Ramadoss}
\address{Departement Mathematik,
Eidgenossische TH Z\"urich,
8092 Z\"urich, Switzerland}
\email{ajay.ramadoss@math.ethz.ch}
\author{Xiang Tang}
\address{Department of Mathematics, Washington University, St.
Louis, MO 63139, USA} \email{xtang@math.wustl.edu}
\begin{abstract}
Let $\, A_q := \c\langle \x, \y\rangle/(xy-qyx)\,$. 
Assuming that $q$ is not a root of unity, we compute the Picard group $\, \Pic(A_q) \,$ of the algebra $A_q$, 
describe its action on the space $ \RR(A_q) $ of isomorphism classes of rank 1 projective modules and 
classify the algebras Morita equivalent to $ A_q $. Our computations are based on a `quantum' version of 
the Calogero-Moser correspondence relating projective $A_q$-modules
to irreducible representations of the double affine Hecke algebras $\,{\mathbb H}_{t, q^{-1/2}}(S_n)\,$ at $ t = 1 $.
We show that, under this correspondence, the action of $\, \Pic(A_q) \,$ on $ \RR(A_q) $ agrees with the action of
$ \SL_2(\Z) $ on $\,{\mathbb H}_{t, q^{-1/2}}(S_n)\,$ constructed by Cherednik \cite{C1, Che}. 
We compare our results with smooth and analytic cases. In particular, when $ |q| \ne 1 $, we find that $\,
\Pic(A_q) \cong \text{Auteq}(\D^b(X))/{\Z} \,$, where $ \D^b(X) $ is the bounded derived category
of coherent sheaves on the elliptic curve $ X = \c^*\!/q^{\Z} $.
\end{abstract}
\maketitle

\section{Introduction}

Let $ \c\langle x^{\pm 1}, y^{\pm 1}\rangle $ be the group algebra of the free group on two generators
$x$ and $y$, with coefficients in $\c$. For a fixed parameter $\,q \in \c \,$, we define 
$\,A_q := \c\langle x^{\pm 1}, y^{\pm 1}\rangle/(xy-qyx)\,$. Unless specified otherwise, we will 
assume in this paper that
\begin{equation}\la{nu}
q^n \ne 1 \quad \mbox{for all}\quad n \in {\mathbb N}\ .
\end{equation}
Under this condition, we will $\,(a)$ classify f.~g. projective modules of $A_q$,
$\,(b)$ compute the Picard group $ \Pic(A_q) $ and 
describe its action on projective modules, and $\,(c)$\ classify the algebras Morita equivalent 
to $ A_q $.

There are several reasons to clarify these questions. 
First, $\,A_q$ may be thought of as a ring 
of functions on a noncommutative {\it algebraic} torus. Now, for a noncommutative {\it smooth} torus $ \AA_q $, 
the answers to $(a)$, $(b)$ and $ (c) $ are well known and well understood (mostly thanks to the work of Rieffel, 
see \cite{R1, R2, R3, K}). Geometrically, the algebras $ \AA_q $ arise as deformations of the ring 
$\,{\mathcal C}^\infty({\mathbb T})\,$ of smooth functions on the two-dimensional torus 
$\,{\mathbb T} = {\mathbb S}^1 \times {\mathbb S}^1\,$, and as such, these are fundamental examples of  
noncommutative differentiable manifolds in the sense of A.~Connes \cite{C}.
On the other hand, algebraically, $ \AA_q $ is just a certain completion of $ A_q $, and it is natural to 
ask how the projective modules, Picard groups, Morita equivalences, etc. behave under this completion.
One might expect that $ A_q $ is `too rigid' compared to $ \AA_q $, and the answers to the above questions
are trivial. We will demonstrate that this is not the case, and although the properties of 
$ A_q $ and $ \AA_q $ are indeed very different, the answers to $(a)$, $(b)$ and $ (c) $ in the algebraic 
case are at least as meaningful and interesting as in the smooth case.

Second, $\, A_q $ may be viewed as a `quantum' (or multiplicative) Weyl algebra. Under the assumption 
\eqref{nu}, the ring-theoretic properties of $ A_q $ are indeed similar to those of the Weyl algebra
$ A_1 = \c\langle x, y\rangle/(xy-yx-1) $ (see \cite{J}). From this perspective, our answer to $ (a) $ should 
not be very surprising and, in fact, is not really new. It is known that the rank one projective modules of $ A_1 $ 
are isomorphic to ideals and as such, can be parametrized by certain smooth algebraic varieties $ \CC_n $ called the 
{\it Calogero-Moser spaces} (see \cite{BW1, BW2, BC}). Similarly, the ideals of $A_q$ are described by a certain 
`quantum' version of the Calogero-Moser spaces $ \CC_n^q $  (see Theorem~\ref{MTh}), and in fact, a similar geometric description 
exists for more general classes of noncommutative algebras (see, e.g., \cite{KKO, NvdB, NS}). What is surprising is the fact 
that, unlike in the Weyl algebra case, the classification of ideals of $A_q$ allows one to compute the Picard group $ \Pic(A_q) $.
By comparison, the Picard group $ \Pic(A_1) $ of the Weyl algebra was first computed by Stafford,
using reduction to fields of characteristic $ p > 0 $ (see \cite{Sta}). A different proof can be found 
in \cite{CH}: it relies on Dixmier's classification \cite{D} of maximal commutative subalgebras of $ A_1 $. 
Both proofs are fairly involved and indirect; in particular, they do not follow from the results of \cite{BW1, BW2, BC}. 
To the best of our knowledge, the group $ \Pic(A_q) $ has not appeared in the literature: we therefore consider 
its computation (Theorem~\ref{Pic}) together with a  related Morita classification (Theorem~\ref{Mor}) as 
the main results of this paper.

Third, the quantum Calogero-Moser spaces $ \CC_n^q $ parametrizing the ideals of $ A_q $ can be defined as 
the spectra of spherical subalgebras of Cherednik's double affine Hecke algebra (DAHA) $\,\H_{1,\, q^{-1/2}}(S_n)\,$ (see \cite{O}).
Our results then imply that there is a natural bijection between the set of equivalence classes of 
irreducible representations of $\,{\mathbb H}_{1, q^{-1/2}}(S_n)\,$ amalgamated for all $\,n\,$ and the set 
$ \RR(A_q) $ of isomorphism classes of ideals of $ A_q $. In the rational
case, the analogous bijection was first observed in \cite{EG}, and its conceptual explanation was given in \cite{BCE}. 
In this paper, we will extend the construction of \cite{BCE} to the quantum case (see Section~\ref{S5}). 
A new interesting observation is that, under the Calogero-Moser correspondence, the action of $\, \Pic(A_q) \,$ on 
$ \RR(A_q) $ corresponds to the action of $ \SL_2(\Z) $ on $\,{\mathbb H}_{1,\, q^{-1/2}}(S_n)\,$ 
constructed (and exploited in many applications) by Cherednik \cite{C1, Che}.

Finally, $ A_q $ are fundamental examples of noncommutative algebras which play a role in many areas of mathematics 
and physics. As a specific motivation to clarify questions $(a)$, $(b)$ and $(c)$  for $ A_q $, we mention a recent 
appearance of this algebra in knot theory: it is shown in  \cite{FGL} (see also \cite{G}) that the classical invariants 
of knots -- the so-called $A$-polynomials -- can be naturally quantized, and the corresponding quantizations 
are given by certain {\it noncyclic} ideals of $A_q$. 
Comparing these ideals for different knots is not an easy problem. Having a general classification and canonical 
forms for all the ideals of $A_q$ is certainly helpful in this context. For further discussion and application of our results
we refer to \cite{S}. 

As another application, we should mention the link to integrable systems. The quantum Calogero-Moser spaces $ \CC_n^q $ 
first appeared in \cite{FR} in connection with 2D Toda hierarchy and the Ruijsenaars-Schneider system; since then they
were discussed in numerous papers on integrable systems. By analogy with the Weyl algebra (see \cite{W, BW1}), the ideals 
of $A_q $ are related to algebraic solutions of these systems, which in turn can be described in terms of 
$q$-version of the adelic Grassmannian of G.Wilson \cite{W, W2} (see \cite{CH1}, \cite{CN} and \cite{BC1} for more details). 

The paper is organized as follows. In Section~\ref{S2}, we define the Calogero-Moser spaces $ \CC^q_n $
and give our classification of ideals of $ A_q $. In Section~\ref{S3}, we compute
the Picard group $ \Pic(A_q) $ and discuss some implications. In Section~\ref{S4}, we describe the action of 
$ \Pic(A_q) $ on the Calogero-Moser spaces and classify the algebras Morita equivalent to $ A_q $ in terms 
of this action. In Section~\ref{S5}, we explain the relation between the DAHA and $ A_q $ and outline proofs 
of our main results. Finally, in Section~\ref{S6}, we compare the properties of $A_q$ with the properties 
of noncommutative smooth tori $ \AA_q $.

\subsection*{Acknowledgements}
We are very grateful to O.~Chalykh for his remarks and suggestions; although he is formally not a coauthor,
many ideas in this paper belong to him (in particular, Theorem~\ref{MTh} in its current form first appeared 
in \cite{BC1}, where it was proven by elementary methods). We are also grateful to M.~Rieffel for his encouragement 
to write this paper and for providing us with references \cite{K}, \cite{K2} and \cite{Wa}. 
The first author would like to thank J.~Alev, A.~Polishchuk and G.~Wilson for interesting discussions and comments.

The work of Yu.~B. and X.~T. was partially supported by NSF grants DMS 0901570 and DMS 0900985, 
respectively. A.~R. is currently funded by the Swiss National Science Foundation (Ambizione
Beitrag Nr. PZ00P2-127427/1).

\section{The Calogero-Moser Correspondence for $A_q$}
\la{S2}

Under the assumption \eqref{nu}, the algebra $ A_q$ is a simple Noetherian
domain of homological dimension $1$ (see \cite{J}, Theorem~2.1). In that case, it is
known that every right projective module of rank $ \ge 2 $ is free,
while every rank one projective module is isomorphic to a right ideal of $ A_q$
(see \cite{We}, Theorem~1). Thus the problem of classifying projectives over $A_q$
reduces to classifying the right ideals of $ A_q $. Below, we will give an explicit
classification of such ideals similar to the classification of ideals in the Weyl algebra 
$A_1 $.

For an integer $\, n \geq 1 \,$, let $ \tilde{\CC}^q_n $ denote the
space of matrices
$$
\{\,(X,\, Y,\, i,\, j) \,:\, X,\, Y \in \GL_n(\c) \, , \, i \in
\c^n, \, j \in \Hom(\c^n,\, \c)\,\} \ ,
$$
satisfying the equation
\begin{equation}
\la{qrone}
qXY-YX+ij=0\ .
\end{equation}
The group $\, \GL_n(\c) \,$ acts on $\, \tilde{\CC}^q_n \,$ in the
natural way:
\begin{equation}
\la{gln}
(X, Y, i, j) \mapsto (g X g^{-1}, \,g Y g^{-1},\, g i,\, j g^{-1}) ,
\quad g \in \GL_n(\c)\ ,
\end{equation}
and this action is free for all $ n $. We define the {\it $n$-th Calogero-Moser space}
$\, \CC^q_n \,$ to be the quotient variety
$\, \tilde{\CC}^q_n/\GL_n(\c) \,$ modulo \eqref{gln}.
$\, \CC^q_n \,$ are smooth irreducible affine symplectic varieties
of (complex) dimension $ 2n $ (see \cite{O}).
We set $\, \CC^q := \bigsqcup_{n \geq 0} \CC^q_n \,$, assuming
that $\, \CC^q_0 \,$ is a point.

In a slightly more invariant way, we may think of $ \CC^q $ as the
space of (isomorphism classes of) triples $\,(V,\,X,\,Y)\,$, where
$ V $ is a finite-dimensional complex vector space, and
$\,X,\,Y\,$ are automorphisms of $V$ satisfying the condition
\begin{equation}
\label{qrone'}
\rk(qXYX^{-1}Y^{-1} - 1_V)= 1\ .
\end{equation}
Now, for each $ n \ge 0 $, there is a natural action on $
\CC_n^q $ by the lattice $ \Z^2 $:
$$
(V,\,X,\,Y) \mapsto (V,\,q^k X,\,q^m Y)\ , \quad (k,\,m) \in \Z^2\ .
$$
We write $\,\bCC_n^q := \CC_n^q/\Z^2\,$ for the corresponding quotient spaces, and
set $\, \bCC^q := \bigsqcup_{n \geq 0} \bCC^q_n \,$.

Now, let $\RR^q := \RR(A_q) $ be the set of isomorphism classes of right
ideals of $A_q$. Our first main result is the following
\begin{theorem}
\la{MTh}
There is a natural bijection $\, \omega:\,\bCC^q \stackrel{\sim}{\to} \RR^q \,$.
\end{theorem}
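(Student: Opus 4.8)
The plan is to construct the bijection $\omega$ directly, exhibiting mutually inverse maps between the matrix data and the ideals, and then to verify that the two group actions we quotient by are precisely the ambiguities of the construction. I would first exploit the skew-Laurent presentation $A_q = \c[y^{\pm 1}][x^{\pm 1};\sigma]$, where $\sigma$ is the automorphism $y \mapsto qy$ determined by $xyx^{-1} = qy$. Since $q$ is not a root of unity, $\sigma$ acts freely on $\Spec \c[y^{\pm 1}] = \c^*$, and this presentation is what lets one trade global ring-theoretic data for ``lattice'' data along a $\sigma$-orbit. The reduction to ideals is already in hand (rank one projectives $\cong$ right ideals), so everything reduces to classifying right ideals of $A_q$ up to isomorphism, with $\CC^q_0$ a point matching the class of $A_q$ itself.

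For the forward map, given a representative $(X,Y,i,j)$ with $qXY - YX + ij = 0$, I would build the associated module $M$ as the middle cohomology of a monad (three-term complex) of free $A_q$-modules whose differentials are assembled from the ``characteristic'' operators built out of $X$, $Y$ and the framing vectors $i,j$, with the generators $x,y$ acting on the free factors. Relation \eqref{qrone} is precisely the condition that this is a complex, and it forces the rank-one defect \eqref{qrone'}, so that $M$ is realized as a right ideal. Well-definedness on $\GL_n(\c)$-orbits is immediate from change of basis of $V$. The key structural observation is that the residual $\Z^2$-action $(X,Y)\mapsto(q^k X, q^m Y)$ corresponds to multiplying the resulting ideal by the units $x$ and $y$ (recall $x,y$ are invertible in $A_q$), which does not change its isomorphism class; this is exactly why the target is $\bCC^q = \CC^q/\Z^2$ rather than $\CC^q$, and it is the first point I would nail down.

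For injectivity and the recovery of $(X,Y)$ from an ideal $M$, I would extract a finite-dimensional ``fiber'' from $M$: comparing $M$ with a reference ideal (clearing denominators so both sit inside $\Frac(A_q)$), the quotient is a finite-dimensional $\c$-vector space $V$ on which right multiplication by $x$ and $y$ induces operators reproducing $X$ and $Y$, while the connecting maps recover $i$ and $j$; the relation $qXY - YX = -ij$ then falls out of the single-generator defect of $M$. The content is to check that isomorphic ideals yield data in the same $\GL_n(\c)\times\Z^2$-orbit and conversely, the $\Z^2$-ambiguity reappearing exactly as the normalization (which units to clear), consistent with the forward map.

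I expect surjectivity -- that every right ideal is isomorphic to some $\omega(V,X,Y)$ -- to be the \emph{main obstacle}. My plan is to normalize an arbitrary ideal via the skew-Laurent structure, presenting it as $\sigma$-equivariant torsion-free rank-one data on $\c^*$ with a length-$n$ defect supported on finitely many $\sigma$-orbits, and then to show that this defect is generated by a finite-dimensional pair $(X,Y)$ satisfying exactly the rank-one condition \eqref{qrone'}. Controlling the defect globally and proving it is captured by such finite matrices is the crux; this is the $q$-analogue of the adelic-Grassmannian argument used for the Weyl algebra in \cite{BW1}, and is the step proved by elementary means in \cite{BC1}. An alternative, more conceptual route -- which I would use as a cross-check -- realizes $\tilde\CC^q_n$ as the representation variety of a deformed preprojective algebra and identifies the ideals of $A_q$ with its modules, matching the spherical DAHA description of $\CC^q_n$ in \cite{O}; this is the mechanism developed in Section~\ref{S5}.
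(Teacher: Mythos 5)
Your outline is viable, but it is a genuinely different route from the paper's own proof. What you sketch is the elementary approach of \cite{BW1, BW2} transported to $A_q$ --- explicit construction of the ideal from $(X,Y,i,j)$ (via a monad; the paper instead writes the closed formulas \eqref{qmx1}--\eqref{qmy1} with the factors \eqref{qka1}--\eqref{qka2}), recovery of the matrices as a finite-dimensional defect, and a $q$-adelic-Grassmannian argument for surjectivity --- which is exactly how the theorem was first proved in \cite{BC1}, as the paper's acknowledgements note. The paper's actual proof (Section~\ref{S5}) instead generalizes \cite{BC, BCE}: it presents $A_q$ as $\Lambda/\langle e_\infty\rangle$ for a generalized multiplicative preprojective algebra $\Lambda$ (Lemma~\ref{laq}), turns $P=H_{q,n}\varepsilon'\oplus H_{q,n}\varepsilon$ into an $H_{q,n}$-$\Lambda$-bimodule (Proposition~\ref{pbimod}), and studies the functor $\CM_n(V)=(V\otimes_{H_{q,n}}P)\Lotimes_\Lambda A_q$; the crux is Lemma~\ref{CM2}, showing that $\Tor_k^\Lambda(P,A_q)$ is concentrated in homological degree $1$ and computing it, which yields Theorem~\ref{MTh2}, and Theorem~\ref{MTh} then follows by invoking Oblomkov's identification $\varepsilon H_{q,n}\varepsilon\cong\O(\CC^q_n)$ (Proposition~\ref{oblom}), so that equivalence classes of simple $H_{q,n}$-modules are exactly the points of $\bCC^q_n$. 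Comparing the two: your route is self-contained and produces normal forms for ideals, but its two genuinely hard steps --- building the monad over a suitable noncommutative compactification, and above all surjectivity --- are named and attributed (to \cite{BW1, BC1}) rather than carried out, and that is where essentially all the work lies; the paper's route delegates its hard inputs to \cite{O} and \cite{BC}, and, decisively for this paper, it is what makes Proposition~\ref{Teqv} tractable, since Cherednik's braid group action on $H_{q,n}$ \cite{Che} transports through the bimodule $P$ to give the $\Pic(A_q)$-equivariance of $\omega$ --- something your construction would not deliver without substantial extra work. One point you identified exactly right, and which matches how the paper's formulas behave: the residual $\Z^2$-action $(X,Y)\mapsto(q^kX,q^mY)$ is absorbed by multiplication by the units $x,y$ of $A_q$ (equivalently, by twisting with inner automorphisms), which is precisely why the bijection is with $\bCC^q=\CC^q/\Z^2$ rather than with $\CC^q$.
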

As in the Weyl algebra case, the bijection $ \omega $ can be described 
quite explicitly; it is induced by the map $\, \CC^q \to \RR^q\,$, 
assigning to $\,(V,\,X,\,Y) \in \CC^q \,$ the class of (isomorphic) fractional
ideals
\begin{gather}
\label{qmx1}
M_x=\det(X-x1_V)\cdot A_q+\kappa^{-1}\det(Y-y1_V)\cdot A_q\,,\\
\label{qmy1} M_y=\det(Y-y1_V)\cdot A_q+\kappa\det(X-x1_V)\cdot
A_q\ ,
\end{gather}
where $\, \kappa \,$ and $\kappa^{-1} $ are given by
the following elements in the quotient field of $A_q$
\begin{gather}\label{qka1}
\kappa=1+j(qY-y1_V)^{-1}(X-x1_V)^{-1}i\,,\\\label{qka2}
\kappa^{-1}=1-j(X-qx1_V)^{-1}(Y-y1_V)^{-1}i\,.
\end{gather}
Theorem~\ref{MTh} thus implies that the right ideals of
$A_q$ (and hence rank one projective $A_q$-modules) are
classified by the conjugacy classes of pairs of matrices
$\,(X,\,Y) \in \GL(V) \times \GL(V)\,$ satisfying
\eqref{qrone'}. Furthermore, every ideal of $ A_q $ is 
isomorphic to one of the form \eqref{qmx1}--\eqref{qmy1},
with pairs $(X,\,Y)$ and $(X',\,Y')$ corresponding to isomorphic 
ideals if and only if $(X',\,Y')\sim (q^kX,\,q^mY)$ for some 
$\,k,\, m \in \Z\,$. As mentioned in the Introduction, this result is similar to 
that for the Weyl algebra $A_1 $, and in the existing literature there are 
several different proofs (see \cite{BW1, BW2, BC, BCE}).
Each of these proofs can be extended to the quantum case. In Section~\ref{S5}, 
we will outline a proof generalizing the arguments of \cite{BC, BCE}: the advantage 
of this approach is that it exhibits an interesting connection between $A_q$ and 
the representation theory of double affine Hecke algebras $\, \H_{1,\, q^{-1/2}}(S_n)\,$, 
which may be of independent interest.

We now discuss some interesting implications of Theorem~\ref{MTh}.

\section{The Picard Group of $ A_q$}
\la{S3}

We begin by introducing notation. Being a Noetherian domain,
the algebra $A_q $ can be embedded into a quotient skew-field, 
which we denote by $Q$. The spaces of
nonzero (Laurent) polynomials in $x$ and $y$ form two Ore subsets
in $A_q $: we write $ \c(x)[\y] \subset Q $, resp. $\,\c(y)[\x]
\subset Q $, for the corresponding localizations. Every element
$\,a \in \c(x)[\y] \,$ can be uniquely written in the form $\,a =
\sum_{m\le i\le n} a_{i}(x)y^i\,$, with $\,a_{i}\in\c(x)\,$ and
$\,a_m,\, a_n\ne 0$. We call $a_n(x) $ the {\it leading
coefficient} of $a$, and the difference $n-m$ the {\it degree} of
$a$.
\blemma \la{Mx}
Every ideal $M$ in $ A_q $ is isomorphic to a fractional ideal
$ M_x $ satisfying

$(1)$ $\ M_x \subset \c(x)[\y]\,$ and $\, M_x \cap \c[\x] \ne \{0\}\,$;

$(2)\ $ all the leading coefficients of elements of $\,M_x\,$ belong to $ \c[\x]\,$;

$(3)$ $\ M_x $ contains an element with a constant leading coefficient.

If $ M_x $ and $ M_x' $ are two fractional ideals of $A_q $, both isomorphic to $M$,
and satisfying $(1)$-$(3)$, then there is a unit $\, u  \in  A_q \,$ such that
$M_x' = u\,M_x$.
\elemma
\bproof
The proof is essentially the same as in the Weyl algebra case (see \cite{BW2}, Lemma~5.1).
First, by \cite{Sta}, Lemma 4.2, every ideal class of $A_q$ contains a
representative $M$ such that $\,M \cap\c[\x]\ne\{0\}\,$. The leading coefficients
of all the elements of $M$ form an ideal in $ \c[\x] $; taking a generator
$ p(x) \in \c[\x] $ of this ideal, we set $\, M_x := p^{-1} M $. It is easy to see that
$ M_x $ thus defined satisfies the properties $(1)$-$(3)$.

Now, if $ M_x' $ is another (fractional) ideal isomorphic to $ M $, we have
$\,M_x' = \gamma \,M_x \,$ for some $ \gamma \in Q $. If both $M_x$ and $ M_x'$
satisfy $(1)$, then $\, \gamma \,$ must be a unit in  $\, \c(x)[\y] \,$ and hence
has the form $\,\gamma = f(x) y^{k}\,$, with $\, f(x) \in \c(x)\setminus\{0\} \,$ and
$ k \in \Z $. Property $(2)$ forces $ f(x) $ to be polynomial, i.~e. $\, f(x) \in \c[\x]\,$, and
then $(3)$ implies that $\, f(x) = \alpha \,x^m \,$ for some $\, \alpha \in \c^* $ and $ m \in \Z $.
Thus $\,\gamma = \alpha\,x^m y^k \,$ is a unit in $A_q$.
\eproof

Given any (fractional) ideal $\, M \subset Q \,$ of $A_q$, its endomorphism ring is naturally
identified with a subring of $ Q \,$:
$$
\End_A(M) = \{\gamma \in Q\ :\ \gamma M \subseteq M\} \ .
$$
This yields a group embedding $\, \Aut_A(M)  \into
Q^* \,$, whose image we denote by $\, U(M) $. Thus $\,U(M) =
\{\gamma \in Q^*\ :\ \gamma M = M\}\,$. In particular, if $ M =
A_q $, we  get the group of units $ U(A_q) $ of  $A_q $ 
canonically embedded in $ Q^* $. 
\blemma \la{Ux} Let $ M_x $ be a fractional ideal of $A_q $
satisfying the properties $(1)$-$(3)$ of Lemma~\ref{Mx}. Then $\,
U(M_x) \subseteq U(A_q) \,$. Moreover, if $\, M_x' \cong M_x \,$
is another representative satisfying $(1)$-$(3)$, we have $\, U(M_x')
= U(M_x) \,$ in $\,U(A_q) $. Thus the subgroup $\, U(M_x)\subseteq
U(A_q) $ depends only on the class of $ M_x $ in $ \RR^q $.
\elemma
\bproof The fact that $\, U(M_x) \subseteq U(A_q) \,$ is immediate
from (the last statement of) Lemma~\ref{Mx}. It implies that every
element in $ U(M_x)$ has the form $\, \gamma = \beta\,x^k y^m \,$,
where $ \beta \in \c^*$, $\,k,\,m \in \Z \,$. Using the
commutation relations of $A_q$, it is easy to see that $\,
u\,\gamma\,u^{-1} = \tilde{\beta}\, x^k y^m \,$ for any $\, u\in
U(A_q)\,$. Hence $\, u\,U(M_x)\,u^{-1} = U(M_x)\,$, which proves
the second claim of the lemma. \eproof

\vspace{1ex}

\begin{remark}
Reversing the roles of $x$ and $y$ in the above lemmas, we obtain
another set of distinguished representatives  $\, M_y \subset
\c(y)[\x] \,$ for any given class in $ \RR^q $. The ideals
\eqref{qmx1} and \eqref{qmy1} appearing in Theorem~\ref{MTh} are
examples of such representatives.
\end{remark}

\vspace{1ex}

The next result is an important consequence of Theorem~\ref{MTh}.

\begin{theorem}
\la{unit}
\la{nonc}
Let $M$ be a noncyclic right ideal of $A_q $. Then $\, \Aut_A(M) \cong \c^* $.
\end{theorem}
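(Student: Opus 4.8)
The plan is to combine Lemma~\ref{Ux} with Theorem~\ref{MTh}. By Lemma~\ref{Ux}, the canonical isomorphism $\Aut_A(M)\cong U(M_x)$ identifies the automorphism group with a subgroup of $U(A_q)=\{\beta\, x^k y^m:\beta\in\c^*,\ k,m\in\Z\}$ that contains the scalars $\c^*$ (which always preserve $M_x$). Hence $U(M_x)/\c^*$ is a subgroup of $\Z^2$, and since $\c^*\subseteq U(M_x)$ already gives $\Aut_A(M)\supseteq\c^*$, it suffices to show that $U(M_x)/\c^*$ is trivial whenever $M$ is noncyclic, i.e. whenever the associated Calogero--Moser datum $(V,X,Y):=\omega^{-1}([M])$ has $V\ne 0$.

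First I would set up a precise dictionary between left multiplication by units on the distinguished representative $M_x$ and the $\Z^2$-action $(X,Y)\mapsto(q^kX,\,q^mY)$ on $(V,X,Y)$. The torus automorphism $\sigma_{k,m}\colon x\mapsto q^k x,\ y\mapsto q^m y$ is inner, being conjugation by the unit $u_{k,m}=x^m y^{-k}$; moreover, since $M_x$ is a right ideal and $u_{k,m}$ a unit, one has $M_x\,u_{k,m}^{-1}=M_x$, so that $\sigma_{k,m}(M_x)=u_{k,m}M_x u_{k,m}^{-1}=u_{k,m}M_x$ is simply left multiplication by $u_{k,m}$. Applying $\sigma_{k,m}$ to the generators in \eqref{qmx1}--\eqref{qmy1} replaces $\det(X-x1_V)$ by $\det(X-q^k x1_V)$ and $\det(Y-y1_V)$ by $\det(Y-q^m y1_V)$, and, using the explicit formulas \eqref{qka1}--\eqref{qka2} for $\kappa$, a direct check shows that $u_{k,m}M_x$ is, up to a nonzero scalar, exactly the distinguished representative built from $(q^{-k}X,\,q^{-m}Y)$. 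Because the construction \eqref{qmx1} depends only on the $\GL_n(\c)$-conjugacy class of $(X,Y)$, Lemma~\ref{Mx} then converts equality of fractional ideals into $\GL_n(\c)$-conjugacy of triples, yielding $U(M_x)/\c^*\cong\Stab_{\Z^2}(V,X,Y)$ via $(k,m)\mapsto u_{k,m}$. The step I expect to be the main obstacle is precisely this last identification: one must verify carefully that $\sigma_{k,m}$ really carries $M_x(X,Y)$ to $M_x(q^{-k}X,q^{-m}Y)$ up to a scalar, and then invoke the uniqueness clause of Lemma~\ref{Mx} to upgrade the mere isomorphism of ideals (which Theorem~\ref{MTh} only ties to $\Z^2$-equivalence) into honest $\GL_n(\c)$-conjugacy of the two triples.

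Granting this identification, the conclusion follows from freeness of the $\Z^2$-action on $\CC^q_n$ for $n\ge 1$. Indeed, suppose $(k,m)\in\Stab_{\Z^2}(V,X,Y)$ with $(k,m)\ne(0,0)$; then there is $g\in\GL_n(\c)$ with $gXg^{-1}=q^kX$ and $gYg^{-1}=q^mY$. Comparing characteristic polynomials, the spectrum of $X$ is invariant under $\lambda\mapsto q^k\lambda$ and that of $Y$ under $\lambda\mapsto q^m\lambda$. Since $X,Y\in\GL_n(\c)$ these spectra avoid $0$, and by \eqref{nu} at least one of $q^k,q^m$ has infinite order; a nonempty finite subset of $\c^*$ cannot be invariant under multiplication by an element of infinite order, forcing $V=0$. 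Thus $M$ would be cyclic, contrary to hypothesis, so $\Stab_{\Z^2}(V,X,Y)$ and hence $U(M_x)/\c^*$ is trivial. This gives $\Aut_A(M)\cong U(M_x)=\c^*$, as claimed.
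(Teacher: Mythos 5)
Your reduction is set up correctly as far as it goes: by Lemma~\ref{Ux} every element of $U(M_x)$ is a unit $\beta x^ay^b$ of $A_q$, conjugation by $u_{k,m}=x^my^{-k}$ is indeed the automorphism $\sigma_{k,m}\colon (x,y)\mapsto(q^kx,q^my)$, and $u_{k,m}M_x=\sigma_{k,m}(M_x)$ since $M_xu_{k,m}^{-1}=M_x$. Your freeness argument for the $\Z^2$-action on $\CC^q_n$ (via spectra of $X$ and $Y$ and the assumption \eqref{nu}) is also correct. But the bridge between these two halves --- the identification $U(M_x)/\c^*\cong\Stab_{\Z^2}(V,X,Y)$ --- is exactly where the proof has a hole, and you have no mechanism to fill it. From $u_{k,m}\in U(M_x)$ you get an \emph{equality of fractional ideals} $M_x(X,Y)=c\,M_x(q^{-k}X,q^{-m}Y)$ (granting your "direct check"; note in passing that $(q^{-k}X,q^{-m}Y,i,j)$ does not satisfy \eqref{qrone} unless $k+m=0$, so $i,j$ must be rescaled, a fixable but real wrinkle in that check). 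To proceed you need this equality to force $\GL_n(\c)$-conjugacy of $(X,Y)$ and $(q^{-k}X,q^{-m}Y)$. Theorem~\ref{MTh} cannot give this: it identifies \emph{isomorphism classes} of ideals with $\Z^2\times\GL_n(\c)$-orbits, so from the equality it only returns the tautology that $(X,Y)$ and $(q^{-k}X,q^{-m}Y)$ lie in the same $\Z^2$-orbit. The uniqueness clause of Lemma~\ref{Mx} cannot give it either: it says two distinguished representatives of one class differ by a unit, which is your \emph{hypothesis}, not a tool for extracting matrix data from an ideal. In fact, the statement you need --- that $M_x(X,Y)$ and $M_x(q^aX,q^bY)$ are genuinely different subsets of $Q$ when $n\ge 1$ and $(a,b)\ne(0,0)$ --- is, via your own dictionary, \emph{equivalent} to the triviality of $U(M_x)/\c^*$, i.e.\ to the theorem being proved. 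The argument is circular at its core.

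The paper's proof does real work precisely at this point, and it is work of a different kind: assuming a non-scalar $\gamma=x^ky^l$ lies in $U(M_x)$, it passes to the second representative $M_y=\kappa M_x$ of \eqref{qmy1}, deduces $\kappa\gamma\kappa^{-1}=\alpha x^cy^d\in U(M_y)$, and compares Laurent expansions to derive first $(c,d)=(k,l)$ and then the functional equation $\kappa(x,y)=\kappa(q^lx,q^{-k}y)$. Expanding $\kappa$ with coefficients $a_{sr}=q^s\,jY^sX^ri$ and exploiting the invertibility of $X$ (its characteristic polynomial has nonzero constant term, so $1=\sum_p c_pX^p$), it kills all coefficients $a_{sr}$, forcing $\kappa=1$ and hence $M_x=M_y=A_q$, contradicting noncyclicity. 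Some analytic/algebraic argument of this sort --- one that distinguishes the actual fractional ideals, not their isomorphism classes --- is unavoidable, and your proposal contains no substitute for it.
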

\bproof
First of all, note that $\,\c^* \subseteq U(M) \subset Q\,$ for any ideal $M$. 
Now, suppose that $\Aut_A(M) \ncong \c^*$. Choose a representative $M_x$ in the 
isomorphism class of $M$ satisfying the conditions $(1)$-$(3)$ of Lemma~\ref{Mx}. 
Since 
$\,\Aut_A(M_x) \cong \Aut_A(M) \ncong {\mathbb C}^*$, we have 
${\mathbb C}^* \subsetneq U(M_x)$.

By Theorem~\ref{MTh}, we may assume that $ M_x $ has the form
\eqref{qmx1} and take $ M_y $ to be the second representative
\eqref{qmy1}. The ideals $M_x$ and $ M_y$ are related by $\,M_y =
\kappa\,M_x\,$, where $\,\kappa = \kappa(x,\,y) \in Q \,$ is
defined in \eqref{qka1}.  It follows that
$$
\gamma \in U(M_x) \quad \Leftrightarrow \quad \kappa\,\gamma \,\kappa^{-1} \in U(M_y)\ .
$$
Now, choose $\gamma \in U(M_x) \setminus {\mathbb C}^*$. Without loss of generality, we can write
$\gamma=x^ky^l$ for some $(k,l) \neq (0,0)$. Then $\,\kappa\, \gamma \, \kappa^{-1}=\alpha x^c y^d\,$ for some 
$\,(c,d) \neq (0,0) \,$ and $\, \alpha \in {\mathbb C}^*$. Whence 
$\,\kappa \gamma^m \kappa^{-1} = {( \alpha x^c y^d)}^m\,$ for every $m \in \mathbb Z$.
It follows that, for any $m \in \mathbb Z$, there exists $\,\alpha_m \in \mathbb C^*\,$ 
such that
\begin{equation} 
\label{a1} 
\gamma^{-m} \kappa \gamma^m \kappa^{-1} = \alpha_m x^{m(c-k)}y^{n(d-l)} \text{ .} 
\end{equation}
If $\,(k,l) \neq (c,d)\,$, we may choose $m$ so that $\,\text{max}\{m(c-k),\,m(d-l)\} > 0\,$. For such $\,m\,$, the equation 
\eqref{a1} clearly does not hold. Therefore we have $\,(c,d)=(k,l)\,$. Further, by comparing the constant terms 
of the Laurent series expansions on both sides of \eqref{a1} (for $m=1$), we find that $\alpha_1=1$. Using the
commutation relations in $ Q $, we may therefore rewrite the equation \eqref{a1} for $m=1$ in the
form
\begin{equation}
\la{eka1} \kappa( x,\,y) = \kappa(q^l x,\,q^{-k} y)\ .
\end{equation}
Expanding now $ \kappa $ into the Laurent series
$$
\kappa(x,\,y) = 1 + \sum_{s,r \ge 0}\,a_{sr}\, y^{-s-1} x^{-r-1}\ , \quad a_{sr} = q^s\,j\,Y^s X^r i\ ,
$$
and substituting it into \eqref{eka1}, we get
\begin{equation}
\la{aa1}
a_{sr}\,\left( 1 - q^{k(s+1) - l(r+1)} \right) = 0\ ,\quad \forall\ r,\,s \in \Z_{+}\ .
\end{equation}

It follows that
$\, a_{rs} = 0 \,$ for all $\, r,\,s\,$, such that $\label{a2} k(s+1) \, \neq \, l(r+1)$.
Now, since $\,X \in \GL_n(\mathbb C)\,$, the characteristic polynomial of $X$ has a nonzero constant term. 
Hence $\,{1}=\sum_{p=1}^{n} c_pX^p\,$ for some $\,c_p \in \mathbb C$. Suppose that 
$\,s,\,r \in \Z_{+}$ satisfy $k(s+1)=l(r+1)$.  Then $k$ and $l$ are both nonzero. Since 
$k(s+1) \neq l(r+p+1)$ for any $p>0$, we have
\begin{equation} \label{a3} 
a_{sr}=q^s \, j\, Y^s X^r i \, = 
\,\sum_{p=1}^{n} c_p\, q^s\, j \,Y^s X^{r+p} i\,  = \sum_{p=1}^{n} c_p \,a_{s \, r+p} =0 \text{ .}
\end{equation}
This shows that $a_{sr}=0$ for all $s,\,r \in \Z_{+}$. Thus $ \kappa = 1 $ and $\, M_x = M_y = A_q \,$.
It follows that $\, M \cong M_x \,$ is a cyclic ideal.
\eproof

Now, using Theorem~\ref{MTh}, we compute the Picard group of the
algebra $ A_q $. Recall that the elements of $ \Pic(A_q) $
are the isomorphism classes of invertible bimodules of $ A_q $,
which are symmetric over $ \c $. There is a natural group homomorphism
$\,\Omega:\,\Aut(A_q) \to \Pic(A_q)\,$, taking $\, \sigma \in
\Aut(A_q) \,$ to the class of the bimodule $ (A_q)^{\sigma} $,
which is isomorphic to $A_q$ as a right module, with left action
of $A_q$ twisted by $ \sigma^{-1} $. By \cite{F}, Theorem~1, the
kernel of this homomorphism is precisely the group $\,\mbox{\rm
Inn}(A_q)\,$ of inner automorphisms of $A_q$, while $ \im(\Omega)
$ consists of those invertible $A_q$-bimodules that are cyclic as
right modules. Since $ A_q $ is a domain, an invertible bimodule
over $A_q$ is just a right ideal $M$ of $A_q $ such that $
\End_{A_q}(M) \cong A_q $ as $\c$-algebras. This last condition
implies that $ \Aut_{A_q}(M) \cong U(A_q) $, so by
Theorem~\ref{nonc}, $ M $ is indeed cyclic. Thus $\,\Omega\,$
is surjective, and we have 

\bprop\la{sexs}
The canonical sequence of groups
\begin{equation}\la{sexs1}
1 \to \mbox{\rm Inn}(A_q) \to  \Aut(A_q) \stackrel{\Omega}{\longrightarrow} \Pic(A_q) \to 1
\end{equation}
is exact.
\eprop
With Proposition~\ref{sexs}, the problem of computing $ \Pic(A_q)
$ reduces to describing the automorphisms of $ A_q $. Let $\,
(\c^*)^2 $ denote the direct product of multiplicative groups of
$\c$. We define an action of $ \SL_2(\Z) $ on $ (\c^*)^2 $ by
\begin{equation}\la{actt}
g\, : \, (\alpha,\,\beta) \ \mapsto (\alpha^a \beta^b,\, \alpha^c \beta^d)\ ,
\quad
g = \begin{pmatrix}
a & b \\
c & d
\end{pmatrix} \in \SL_2(\Z)\ ,
\end{equation}
and form the semidirect product $\ (\c^*)^2 \rtimes \SL_2(\Z) \,$ relative to this action.
The following result is probably well known (see, e.g., \cite{AD1, AD2}).

\blemma
\la{Aut}
$\, \Aut(A_q) \cong  (\c^*)^2 \rtimes \SL_2(\Z) \,$.
\elemma
\bproof We first recall the well-known presentation of $ \SL_2(\Z) $ as a quotient of
the braid group $ B_2 \,$:
\begin{equation}\la{braid}
\SL_2(\Z) = \langle g_1\, ,g_2\, | \text{ } \, g_1g_2g_1=g_2g_1g_2,\,\, \, (g_1g_2)^6= 1 \, \rangle .
\end{equation}
The braid generators of $\,g_1$ and $ \,g_2 $ correspond under \eqref{braid} to the following matrices
$$
g_1 = \begin{pmatrix}
1 &  1 \\
0 & 1
\end{pmatrix} \quad , \quad
g_2 = \begin{pmatrix}
1 & 0\\
-1 & 1
\end{pmatrix}\ .
$$
Now, using this presentation, it is easy to check that
$$
g_1\,: \, (x,\,y)  \mapsto (yx,\, y)\quad , \quad g_2\,: (x,\,y)  \mapsto (x,\, yx^{-1})\ .
$$
extends to a well-defined group homomorphism: $\,\SL_2(\Z) \to \Aut(A_q) \,$ of the following
form
$$
g\,:\ (x,\,y)  \mapsto (\alpha_g\,y^b x^a,\, \beta_g\,y^d x^c) \ ,
$$
where $ g \in \SL_2(\Z) $ as in \eqref{actt} and $\, \alpha_g, \beta_g \in \c^* \,$ are some constants depending
on $ g $. On the other hand, there is an obvious homomorphism $\, (\c^*)^2 \to \Aut(A_q) \,$, mapping
$\,(x,\,y) \mapsto (\alpha x, \,\beta y)\,$ for $\,(\alpha,\,\beta) \in (\c^*)^2 $. These two homomorphisms
fit together giving
\begin{equation}
\la{act2}
(\c^*)^2 \rtimes \SL_2(\Z) \to \Aut(A_q)\ ,
\end{equation}
which is easily seen to be injective.

On the other hand, any element $\,\sigma \in \Aut(A_q) \,$ takes units to units: in
particular, it maps the generators $\, (x,\,y) \,$ to elements of
the form $\,(\alpha\,y^b x^a,\,\beta\, y^d x^c)\,$, with
$\,\alpha,\,\beta \in c^*\,$ and $\,a,\,b,\,c,\,d \in \Z\,$. The
invertiblity of $ \sigma $ means that $\,ad-bc= \pm 1\,$ and the
relation $\,\sigma(xyx^{-1}y^{-1}) = q\,$ ensures that $\,ad-bc=
1\,$. This proves that \eqref{act2} is surjective. 
\eproof

Under the identification of Lemma~\ref{Aut}, the inner
automorphisms of $ A_q $ correspond to the elements $\, (q^{n}
x,\,q^{m} y;\,1) \in (\c^*)^2 \rtimes \SL_2(\Z) \,$, with $\,n,\,m
\in \Z\,$. Indeed, one can compute easily that the canonical
projection $\, U(A_q) \onto \mbox{\rm Inn}(A_q) \,$, $\,u \mapsto
\mbox{\rm Ad}(u)\, $, is given on generators by $\,\mbox{\rm
Ad}(u):\, (x,\,y) \mapsto (q^{-b}x,\,q^{a}y)\,$, where $\,u =
\alpha \,x^a y^b \in U(A_q)\,$. Thus, we arrive at the following
theorem, which is the main result of this paper.
\bthm
\la{Pic}
With identification of Lemma~\ref{Aut}, the canonical map $ \Omega $ induces
an isomorphism of groups
\begin{equation}
\la{picc}
\Pic(A_q) \cong (\c^*\!/\Z)^2 \rtimes \SL_2(\Z) \ ,
\end{equation}
where $ \Z $ is identified with the cyclic subgroup of $\, \c^* $
generated by $ q $.
\ethm

We end this section with a side observation. Assume that $\, |q| \ne 1 \,$.
Regarding $ \c^* $ as complex analytic space, we can then identify the quotient $\,\c^*\!/{\Z}\,$ with
a (smooth) elliptic curve $ X $. Let $ \D^b(X) $ denote the bounded derived category of coherent sheaves on
$ X $, and let $\, \Auteq\,\D^b(X)\,$ be the group of (exact) auto-equivalences of this category. Comparing
our Theorem~\ref{Pic} with results of \cite{Or, ST}, we get
\begin{corollary}
\la{Pell}
There is a natural group isomorphism
\begin{equation}
\la{g}
\gamma:\ \Pic(A_q) \stackrel{\sim}{\rightarrow} \Auteq(\D^b(X))/{\Z} \ .
\end{equation}
where $\, \Z \subset \Auteq(\D^b(X)) \,$ corresponds to the subgroup of translation functors on $ \D^b(X) $.
\end{corollary}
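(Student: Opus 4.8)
The plan is to combine Theorem~\ref{Pic} with the known description of $\Auteq(\D^b(X))$ for an elliptic curve $X$. The target is the isomorphism \eqref{g}, so I must identify both sides with the same explicit group and verify the matching is natural (i.e.\ compatible with the group structures, not merely an abstract isomorphism of sets). Recall that by the theorem of Orlov \cite{Or} (together with \cite{ST}), for a smooth elliptic curve $X$ one has a short exact sequence
\begin{equation}
\la{orlov}
1 \to (\Pic^0(X) \times \Z) \rtimes \Aut(X) \to \Auteq(\D^b(X)) \to \SL_2(\Z) \to 1\ ,
\end{equation}
where the $\Z$-factor records the shift functor $[1]$, $\Pic^0(X)$ records tensoring by degree-zero line bundles, $\Aut(X)$ is the automorphism group of $X$ as a variety, and the quotient $\SL_2(\Z)$ acts on the numerical Grothendieck group $\Z^2=H^0\oplus H^2$ preserving the Euler pairing. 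The first step is to pin down each of these pieces for $X=\c^*/q^{\Z}$.

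First I would identify $\Pic^0(X)$. For the complex elliptic curve $X=\c^*/q^{\Z}$, the group of degree-zero line bundles is canonically isomorphic to $X$ itself, hence to $\c^*/q^{\Z}=\c^*/\Z$. Second, I would analyze $\Aut(X)$: for a generic elliptic curve the automorphism group fixing the origin is $\{\pm 1\}$, but the full automorphism group (allowing translations) is $X \rtimes \{\pm 1\}$; since the translation part is exactly $\Pic^0(X)\cong X$ under the Abel--Jacobi identification, the piece $(\Pic^0(X)\times\Z)\rtimes\Aut(X)$ collapses, after absorbing translations, to a group built from two copies of $\c^*/\Z$ and the shift $\Z$, together with the residual $\{\pm 1\}$ which is visible inside $\SL_2(\Z)$ as $-1$. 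The cleanest bookkeeping is to pass to the quotient $\Auteq(\D^b(X))/\Z$ immediately, killing the shift functor $[1]$; this removes the $\Z$-factor in \eqref{orlov} and leaves a group that is an extension of $\SL_2(\Z)$ by $(\c^*/\Z)^2$, which is precisely the right-hand side of \eqref{picc}.

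Concretely, the proof would run as follows. After quotienting \eqref{orlov} by the shift subgroup $\Z$, I would show that the resulting extension
$$
1 \to (\c^*/\Z)^2 \to \Auteq(\D^b(X))/\Z \to \SL_2(\Z) \to 1
$$
is, as a semidirect product, isomorphic to $(\c^*/\Z)^2\rtimes\SL_2(\Z)$ with $\SL_2(\Z)$ acting through the standard representation on $\Z^2$ via the formula \eqref{actt}. The two copies of $\c^*/\Z$ arise from $\Pic^0(X)$ (tensoring by line bundles) and from the identification of $X$ with translations; the $\SL_2(\Z)$-action on this pair is exactly the action on $(H^0\oplus H^2)$-data induced by a Fourier--Mukai transform, which on the level of the torus $(\c^*/\Z)^2$ is the same formula $(\alpha,\beta)\mapsto(\alpha^a\beta^b,\alpha^c\beta^d)$ appearing in \eqref{actt}. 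Comparing this with the right-hand side of \eqref{picc} from Theorem~\ref{Pic} then yields the desired isomorphism $\gamma$, and naturality is clear since both identifications are given by the same $\SL_2(\Z)$-equivariant formulas.

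I expect the main obstacle to be the bookkeeping of how translations on $X$ interact with $\Pic^0(X)$ and with the $\{\pm 1\}$ appearing in both $\Aut(X)$ and $\SL_2(\Z)$, so that the splitting and the semidirect-product structure are correctly matched rather than double-counted. In particular, one must be careful that the element $-1\in\SL_2(\Z)$ (inversion on $X$) is accounted for on the $\SL_2(\Z)$ side and not also in an $\Aut(X)$ factor, and that the action \eqref{actt} of $\SL_2(\Z)$ on $(\c^*/\Z)^2$ coincides with its geometric action via Fourier--Mukai on $\Pic^0(X)\times X$. Once this dictionary is set up, the isomorphism follows formally by comparing the two semidirect-product presentations.
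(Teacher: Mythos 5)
Your overall strategy (invoke the Orlov/Seidel--Thomas description of $\Auteq(\D^b(X))$, kill the shift functor, and compare with Theorem~\ref{Pic}) is reasonable, but it has two genuine problems. First, your input exact sequence is not correct as stated: its kernel cannot contain both the shift $[1]$ and all of $\Aut(X)$ if the quotient is to be $\SL_2(\Z)$. Under the action on the numerical Grothendieck group, $[1]$ acts by $-\id$ (so it is \emph{not} in the kernel) while $(-1_X)^*$ acts trivially; under Orlov's homomorphism (conjugation on translations and degree-zero twists) it is the other way around: $[1]$ is in the kernel and $(-1_X)^*$ maps to $-\id$. With the kernel you wrote, which contains both $[1]$ and $(-1_X)^*$, the quotient would be $\PSL_2(\Z)$, not $\SL_2(\Z)$. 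The correct statement (when $\End(X)=\Z$) is $\,1 \to \Z \times (X \times \hat{X}) \to \Auteq(\D^b(X)) \to \SL_2(\Z) \to 1\,$, with the kernel consisting of shifts, translations and degree-zero twists, and with $(-1_X)^*$ mapping to $-\id$ in the quotient. Your passage to $\Auteq(\D^b(X))/\Z$ does land on a correct extension $\,1 \to (\c^*\!/\Z)^2 \to \Auteq(\D^b(X))/\Z \to \SL_2(\Z) \to 1\,$, but you reach it from a faulty premise, and the bookkeeping of where $-1$ lives is precisely the point you flagged but did not resolve.

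Second, and more seriously: the crux of your argument --- that this extension \emph{is} a semidirect product, i.e.\ that the projection onto $\SL_2(\Z)$ admits a group-theoretic section --- is asserted rather than proved. Orlov's theorem provides no splitting, and an extension of $\SL_2(\Z)$ by $(\c^*\!/\Z)^2$ with a nontrivial action has no a priori reason to split; without a section, your plan of "comparing the two semidirect-product presentations" cannot start, because only one side (Theorem~\ref{Pic}) is known to be a semidirect product. Producing the section is exactly the content of the paper's proof: using the presentation \eqref{braid} and the Seidel--Thomas relations among spherical twists, one checks that $g_1 \mapsto T_{\O_{x_0}}$, $g_2 \mapsto T_{\O}$ gives a well-defined homomorphism $\gamma_2 : \SL_2(\Z) \to \Auteq(\D^b(X))/\Z$ --- the key facts being that the braid relation $T_{\O_{x_0}} T_{\O} T_{\O_{x_0}} \cong T_{\O} T_{\O_{x_0}} T_{\O}$ holds on the nose while $(T_{\O_{x_0}} T_{\O})^6$ is a shift, hence trivial in the quotient --- and then that $\gamma_2$ is compatible with $\gamma_1 : X \times X \to \Auteq(\D^b(X))/\Z$, $(x,y) \mapsto L_x R_y$, with respect to the action \eqref{actt}. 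Until you either exhibit such a section or prove the vanishing of the relevant second cohomology class of the extension, your argument has a gap exactly where the real work lies; supplying that step would essentially reproduce the paper's construction of $\gamma$.
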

We will briefly explain how to construct the isomorphism $ \gamma $.
Let $ x_0 \in X $ be the point corresponding to the image of $\, 1 \in \c^* \,$ under the canonical projection
$\,\c^* \onto X \,$. By \cite{Or} (see also \cite{ST}), the group $\, \Auteq \,\D^b(X) \,$ is
then generated by the functors
\begin{equation}
\la{auteq}
T_{\mathcal O},\, T_{{\mathcal O}_{x_0}}\, \text{ and } R_x,\,L_x \ (x \in X)\ ,
\end{equation}
where $\,L_x\,$ is induced by tensoring with the line bundle $\,\O(x-x_0)$, $\,R_x $ is the pull-back
via the automorphism $\,X \to X\,$, $\,z \mapsto x \cdot z\,$, and
$T_{\mathcal E}$ denotes the Fourier-Mukai transform with kernel
$$
{\tt Cone}({\mathcal E}^{\vee} \boxtimes {\mathcal E} \rightarrow
\Delta_*{\mathcal O}_X) \ , \qquad {\mathcal E} \in  \D^b(X)\ .
$$
Now notice that there is the obvious homomorphism
\begin{equation}
\la{g2}
\gamma_1: \, X \times X \rightarrow \text{Auteq}(\D^b(X))/{\Z}\ ,
\quad  (x,y) \mapsto L_x\,R_y \ .
\end{equation}
More interestingly, using the known relations between the functors \eqref{auteq} (see \cite{ST}, Section~3d), one can 
easily check that  $\,g_1 \mapsto T_{\mathcal O_{x_0}} \,$ and $\,g_2 \mapsto T_{{\mathcal O}}\,$ extend to 
a well-defined homomorphism
\begin{equation}
\la{g1}
\gamma_2: \ \SL_2(\Z) \rightarrow \text{Auteq}(\D^b(X))/{\Z}\ .
\end{equation}
Another direct calculation shows that the maps \eqref{g2} and \eqref{g1}  agree with each other extending to the semidirect product 
$\,(X \times X) \rtimes \SL_2(\Z) \,$. By Theorem~\ref{Pic}, this defines the desired isomorphism \eqref{g}. It would be very 
interesting to find a conceptual explanation for this isomorphism (cf. \cite{ST}, Remark~1.5). 
The results of \cite{BaEG} as well as recent papers \cite{SV} and  \cite{Po} suggest that the existence of $ \gamma $ may not be
a mere coincidence.

\section{Morita classification}
\la{S4}

We begin by classifying the algebras $ A_q $ up to Morita equivalence within the family
$\,\{A_q\}\,$ and then consider the general case.
\bthm
\la{Th2} Under the assumption \eqref{nu}, the algebras $ A_q
$ and $ A_{q'} $ are Morita equivalent if and only if they are
isomorphic, i.~e. if and only if $\, q' = q \,$ or $\, q' = q^{-1}
$.
\ethm
\bproof
The fact that $\, A_q \cong A_{q'} \,$ $\,\Leftrightarrow
\,$ $\,q' = q^{\pm 1} $ follows easily from the defining relations
of $A_q$ and $A_{q'}$ (see \cite{J}, Theorem~1.3). Now, assume
that $ A_{q'} $ and $A_q $ are Morita equivalent. Then  $\, A_{q'}
\cong \End_{A_q}(M) \,$ for some f. g. projective right module
$M$. Since $A_{q'} $ is a domain, $ M $ is isomorphic to a right
ideal in $ A_q $ (see, e.g., \cite{BEG}, Lemma~3). Since 
$\End_{A_q}(M) \cong A_{q'}$, $\Aut_A(M) \ncong {\mathbb C}^*$. 
By Theorem~\ref{unit}, $M \cong A_q$. Therefore, $A_{q'} \cong A_q$ 
which implies that $\, q'= q^{\pm 1}$. 
\eproof

\begin{remark}
Theorem~\ref{Th2} was first proven in~\cite{RS} by a different method.
If $\, q = e^{2\pi i \theta}\,$ and $\,q' = e^{2 \pi i \theta'} \,$, with $ \theta,\,\theta' \in
\R\!\setminus\!\Q $, then Theorem~\ref{Th2} says that $ A_{\theta}  $ and $ A_{\theta'}  $ are equivalent if
and only if $\,\theta' \pm \theta \in \Z \,$. This result should be compared with a well-known Morita
classification of smooth noncommutative tori $\, \ms{A}_{\theta} \,$ (see \cite{R1}): in that
case, $ \ms{A}_{\theta} $ and $ \ms{A}_{\theta'} $ are (strongly) Morita equivalent if and only if
$ \theta $ and $ \theta' $ are in the same orbit of $ \GL_2(\Z) $ acting on $\, \R\!\setminus\!\Q \,$ by
fractional linear transformations. Thus, unlike in the smooth case, there are no interesting Morita 
equivalences between the algebras
$A_{q}$ for different values of $ q $. However, in the
Morita class of each $ A_{q} $ there are many non-isomoprhic algebras corresponding
to different orbits of $ \Pic(A_q) $ in $ \RR $. We will classify these algebras below,
using Theorem~\ref{MTh}.
\end{remark}

\vspace{1ex}

We now describe an action of $ \Pic(A_q) $ on the (reduced) Calogero-Moser spaces
$ \bCC^q_n $. In Section~\ref{S5}, we will show that this action comes from a natural
action of the braid group $ B_2 $ on the double affine Hecke algebra $ \H_{1,\tau}(S_n) $
constructed by Cherednik in \cite{Che}.
We begin by defining an action of $ \Aut(A_q)$ on $ \CC_n^q $. With identification
of Lemma~\ref{Aut}, it suffices to construct two compatible group homomorphisms
\begin{equation}\la{fg}
f_1:\ (\c^*)^2 \to \Aut(\CC_n^q)\quad  \mbox{and} \quad f_2:\ \SL_2(\Z)
\to \Aut(\CC_n^q)\ ,
\end{equation}
where $\,\Aut(\CC_n^q)\,$ denotes the group of regular (algebraic)
automorphisms of $ \CC_n^q $. First, we let
$$
f_1(\alpha,\,\beta)\, :\ (X,\,Y) \mapsto (\alpha^{-1} X,\,\beta^{-1}\,Y)\ ,
\quad (\alpha,\,\beta)\in (\c^*)^2\ .
$$
Next, to define the second homomorphism we will use the presentation \eqref{braid}: on the braid generators,
we define $ f_2 $ by
$$
g_1:\,(X, Y) \mapsto (Y^{-1}X, Y)\ ,\quad g_2:\, (X, Y) \mapsto (X, YX)\ .
$$
A direct calculation then shows that $\, f_2(g_1g_2g_1) = f_2(g_2g_1g_2) \,$ and
$\,f_2(g_1g_2)^6 = 1 \,$ in $\, \Aut(\CC_n^q) \,$. Hence, this assignment
extends to a well-defined homomorphism $\,f_2:\, \SL_2(\Z) \to \Aut(\CC_n^q)\,$.
It is also easy to check that $ f_1 $ and $ f_2 $ are compatible in the sense that
\begin{equation}\la{acf}
f := (f_1,\,f_2)\,:\ (\c^*)^2 \rtimes \SL_2(\Z) \to \Aut(\CC_n^q)\ .
\end{equation}
is a group homomorphism. Now, with identification of Theorem~\ref{Pic}, we see that $ f $ induces
\begin{equation}\la{acf1}
\bar{f}\, :\ \Pic(A_q) \to \Aut(\bCC_n^q)\ ,
\end{equation}
which defines an action of $\Pic(A_q)$ on each of the spaces $
\bCC_n^q$ and hence on their disjoint union $\, \bCC^q $. On the other hand, $ \Pic(A_q)$ acts
naturally on the space of ideal classes $ \RR^q = \RR(A_q) $. With these actions, we have
\bprop
\la{Teqv}
The map $\, \omega:\,\bCC^q \to \RR^q \,$ of Theorem~\ref{MTh} is equivariant under $ \Pic(A_q) $.
\eprop
We will prove Proposition~\ref{Teqv} in Section~\ref{S5}.  Its meaning becomes clear from the following
theorem which gives a geometric classification of algebras Morita equivalent to $ A_q $.
\bthm
\la{Mor}
There is a natural bijection between the orbits of $ \Pic(A_q) $ in $ \bCC^q $
and the isomorphism classes of domains Morita equivalent to $ A_q $.
\ethm
\bproof
The map $\, \omega:\,\bCC^q \to \RR^q \,$ assigns to a point in $ \bCC^q $ an isomorphism class $ [M] $ of ideals
in $ A_q $. Choosing a representative $ M $ in such a class and taking its endomorphism ring $\,\End_{A_q}(M)\,$
yields a domain Morita equivalent to $A_q $. Now, a well-known result in ring theory (see, e.g, \cite{F}, Theorem~1)
says that the rings $\,\End_{A_q}(M)\,$ and $\,\End_{A_q}(M')\,$ are isomorphic
iff the corresponding classes $ [M] $ and $ [M'] $ are in the same orbit of $ \Pic(A_q) $ in $ \RR_q $. Since,
by Morita Theorem, every domain equivalent to $A_q$ is of the form $ \End_{A_q}(M)$, Theorem~\ref{Mor} follows
immediately from Theorem~\ref{MTh} and Proposition~\ref{Teqv}.
\eproof

\begin{remark}
If $ D $ is an algebra Morita equivalent to $ A_q $, which is {\it not} a domain,
then $\,D \cong {\mathbb M}_r(A_q)\,$ for some $ r \ge 2 $. This follows from the fact that
all projective modules over $ A_q $ of rank $ r \ge 2 $ are free.
\end{remark}

\vspace{1ex}

In general, the domains Morita equivalent to $ A_q $ seem to have a complicated stucture; in particular, they are
not easy to describe in terms of generators and relations like $ A_q $. However, their automorphism groups
can be described geometrically, in terms of the action of $\Pic(A_q)$ on $\bCC^q $. Precisely, we have
the following consequence of Theorem~\ref{MTh} and Theorem~\ref{unit}.
\begin{proposition}
\la{autend}
Let $ M $ be a noncyclic right ideal of $A_q $, and let $ E = \End_{A_q}(M) $ denote its endmorphism algebra. 
Then $\, \Aut_\c(E) \,$ can be naturally identified
with a subgroup of $ \Pic(A_q) $, which is isomorphic to the stabilizer of the point $\,\omega^{-1}[M]\,$
under the action $\,\Pic(A_q)\,$ on $\bCC^q $.
\end{proposition}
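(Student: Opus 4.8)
The plan is to realize $\Aut_\c(E)$ first as a subgroup of $\Pic(E)$, then to transport this subgroup across the Morita isomorphism $\Pic(E) \cong \Pic(A_q)$ and match it with the stabilizer of $[M]$. Throughout I use that, since $A_q$ is simple, the nonzero right ideal $M$ is a generator; being also finitely generated projective, it is a progenerator, so $M$ is an $(E,A_q)$-bimodule implementing a Morita equivalence between $E = \End_{A_q}(M)$ and $A_q$, with inverse bimodule $M^\vee := \Hom_{A_q}(M, A_q)$.

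First I would determine the units of $E$. An element of $E$ is invertible exactly when it is an automorphism of $M$ as a right $A_q$-module, so the unit group is $\, E^\times = \Aut_A(M)\,$; since $M$ is noncyclic, Theorem~\ref{nonc} gives $\, E^\times \cong \c^*\,$. As these units are the central scalars $\, \c^*\cdot 1\,$, every inner automorphism of $E$ is trivial, i.e. $\Inn(E) = 1$. Applying \cite{F}, Theorem~1, to $E$ in place of $A_q$, the canonical homomorphism $\,\Aut_\c(E) \to \Pic(E)\,$, $\,\sigma \mapsto [E^\sigma]\,$, has trivial kernel and is therefore injective; by the same theorem its image consists of the classes of invertible $E$-bimodules that are free of rank one as right $E$-modules.

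Next I would invoke Morita invariance of the Picard group to obtain a canonical isomorphism $\,\Pic(E) \stackrel{\sim}{\to} \Pic(A_q)\,$, $\,[B] \mapsto [M^\vee \otimes_E B \otimes_E M]\,$, using $\, M \otimes_{A_q} M^\vee \cong E\,$ and $\, M^\vee \otimes_E M \cong A_q\,$. Composing with the embedding above identifies $\Aut_\c(E)$ with a subgroup of $\Pic(A_q)$, which is the asserted natural identification. It then remains to show that this subgroup is $\Stab_{\Pic(A_q)}[M]$ for the action $\,[M'] \mapsto [M' \otimes_{A_q} N]\,$ of $\Pic(A_q)$ on $\RR^q$. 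In one direction, for $\sigma \in \Aut_\c(E)$ the transported bimodule $\, N_\sigma = M^\vee \otimes_E E^\sigma \otimes_E M\,$ satisfies $\, M \otimes_{A_q} N_\sigma \cong (M \otimes_{A_q} M^\vee) \otimes_E E^\sigma \otimes_E M \cong E^\sigma \otimes_E M \cong M\,$ as right $A_q$-modules (the last step because $E^\sigma \cong E$ as a right $E$-module), so $[N_\sigma]$ fixes $[M]$. Conversely, if $[N]$ fixes $[M]$, a right $A_q$-module isomorphism $\, M \otimes_{A_q} N \cong M\,$ shows that the corresponding bimodule $\, M \otimes_{A_q} N \otimes_{A_q} M^\vee \cong M \otimes_{A_q} M^\vee \cong E\,$ is free of rank one on the right, so $[N]$ lies in the subgroup. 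Hence the subgroup equals $\Stab_{\Pic(A_q)}[M]$, and by the equivariance (Proposition~\ref{Teqv}) and bijectivity (Theorem~\ref{MTh}) of $\omega$ this coincides with $\Stab_{\Pic(A_q)}(\omega^{-1}[M])$.

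The crux, and the step most likely to cause trouble, is this last identification: one must keep careful track of the left and right bimodule structures through the Morita transport, verify the tensor-product identities relating $E^\sigma$, $N_\sigma$ and $M$, and confirm that every map in the chain $\Aut_\c(E) \hookrightarrow \Pic(E) \stackrel{\sim}{\to} \Pic(A_q)$ is a group homomorphism, so that the final statement is an isomorphism of groups and not merely a bijection of sets. By contrast, the inputs that make the argument run, namely $E^\times \cong \c^*$ from Theorem~\ref{nonc} (which forces $\Inn(E)=1$) and the equivariance of $\omega$, are already available.
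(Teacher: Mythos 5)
Your proposal is correct and follows essentially the same route as the paper's own proof: both realize $\Aut_\c(E)$ inside $\Pic(E)$ via Fr\"ohlich's theorem (with $\Inn(E)=1$ coming from Theorem~\ref{unit}), transport it to $\Pic(A_q)$ through the Morita isomorphism $P \mapsto M^* \otimes_E P \otimes_E M$, and establish both inclusions against $\Stab_{\Pic(A_q)}[M]$ by the same tensor-product computations, concluding with Theorem~\ref{MTh} and Proposition~\ref{Teqv}. The only differences are cosmetic: you verify the triviality of $\Inn(E)$ at the outset (noting the units of $E$ are central scalars), whereas the paper records it at the end.
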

\begin{proof}
Put $\,A:=A_q\,$. Then $M$ is naturally a $E$-$A$-bimodule.
Let $ M^* := \Hom_{A}(M,\,A) $ be its dual which is an $A$-$E$-bimodule. By Morita theorem, we
have $\,M^* \otimes_E M \cong A\,$ as $A$-bimodules and $\, M \otimes_A M^* \cong E \,$ as $E$-bimodules.
Now, consider the canonical sequence of groups
$$
1 \rightarrow \text{Inn}(E) \rightarrow
\text{Aut}(E) \stackrel{\Omega}{\longrightarrow} \Pic(E) \stackrel{\Psi}{\to} \Pic(A) \ ,
$$
where the last map is an isomorphism groups given by $\,P \mapsto M^* \otimes_E P\otimes_E M\,$.
The image of $\Omega $ consists of those invertible $E$-bimodules that are cyclic as right $E$-modules.
Given such an $E$-bimodule $L$, we have
$$
M \otimes_A M^* \otimes_E L \otimes_E M \cong E
\otimes_E L \otimes_E M \cong L \otimes_E M \cong M
$$
as right$A$-modules. Hence, $\,\Psi(L) = M^* \otimes_E L \otimes_E M$ is in the stabilizer
of $[M]$ under the action of $\Pic(A)$ on $ \RR^q $. Conversely, if $P$ is an
invertible $A$-bimodule such that $\,M \otimes_A P \cong M\,$, then
$$
M \otimes_A P \otimes_A M^* \cong M \otimes_A M^* \cong E
$$
as right $E $ modules. Thus, $\, \Psi^{-1}(P) \cong M \otimes_A P \otimes_A M^*\,$ is in the image of
$\Omega$. It follows now from Theorem ~\ref{MTh} that $\im(\Omega)$ is isomorphic to the stabilizer
of $ \omega^{-1}[M] $ under the $\Pic(A)$-action on $ \bCC^q $. On the other hand, by Theorem~\ref{unit},
$\,\text{Inn}(E)\,$ is trivial. Hence $\,\Aut(E) \cong \im(\Omega)\,$. This finishes the proof of the
proposition.
\end{proof}

\section{Double Affine Hecke Algebras and the Calogero-Moser Correspondence}
\la{S5}
In this section, we describe a relation between $ A_q $ and the double affine Hecke algebras 
$\, H_{q,n} := \H_{1,\,q^{-1/2}}(S_n) \,$. Our construction generalizes (and simplifies) 
the results of \cite{BCE}, where a similar relation between $A_1$ and the rational Cherednik
algebra has been studied. A key role in this construction is played by a multiplicative 
version of the deformed preprojective algebra of a quiver introduced in \cite{CBS}. We draw reader's attention 
to the fact that we use a more general form of these algebras in which weights are assigned not
only to the vertices but also to the edges of the quiver.
To simplify exposition we omit most routine calculations, especially those ones parallel to \cite{BCE}. 
However, we will give some details in the proof of Proposition~\ref{Teqv}, since the idea of this proof 
has not been used in the case of the Weyl algebra.
\subsection{DAHA and the Calogero-Moser spaces} 
We recall the presentation of the double affine Hecke algebra $\, H_{q,n} \,$  (see \cite{Che}).

\vspace{1ex}

\noindent
{\rm Generators}: 
\begin{eqnarray*}
X_1^{\pm 1}\ ,\ X_2^{\pm 1}\ , \ \ldots\ ,\ X_n^{\pm 1}\ ;\quad T_1\ , \ T_2\ ,\ \ldots\ ,\ T_{n-1}\ ;\ \pi \ .
\end{eqnarray*}

\noindent
{\rm Relations}:
\begin{eqnarray*}
&&X_i\,X_j = X_j\,X_i\ , \quad T_i\,X_i\,T_i = X_{i+1} \ , \quad \pi\, X_{i} = X_{i+1}\,\pi\ , \quad 1 \, \leq i \, \leq n-1\ , \\*[1ex]
&&T_i\,T_{i+1}\,T_i = T_{i+1}\,T_i\,T_{i+1}\ , \quad \pi \,T_i = T_{i+1}\,\pi\ ,\quad 1 \leq i \leq n-2\  ,\\*[1ex]
&&T_iX_j = X_jT_i\quad (j-i \neq 0,1)\ ,\quad \text{[}T_i,T_j\text{]}=0 \quad (|i-j|>1)\ , \\*[1ex]
&&\pi\, X_n = X_1\, \pi \quad , \quad \pi\, T_i = T_{i+1}\,\pi\quad , \quad  \pi^n\, T_i= T_i\,\pi^n\ , \\*[1ex]
&&(T_i- q^{-1/2})\,(T_i+ q^{-1/2}) = 0\ .
\end{eqnarray*}
Further, following \cite{Che}, we introduce $\,n\,$ pairwise commuting elements in $\,H_{q,n} \,$:
$$
Y_i := T_i\,T_{i+1}\,\ldots\,T_{n-1}\,\pi^{-1}\,T_1^{-1}\,\ldots\,T_{i-1}^{-1}\ ,\quad i = 1, \,2,\,\ldots\,,\, n \ ,
$$
satisfying the relations
$$
T_iY_{i+1}T_i = Y_i \quad \text{and} \quad T_iY_j = Y_jT_i \ ,\quad j-i \neq 0,\, 1\ .
$$
The algebra $ H_{q,n} $ contains a copy of the finite Hecke algebra $\,\mathcal{H}_{q^{-1/2}}(S_n)\,$ of
the group $ S_n $, which, in turn, contains the idempotents
\begin{equation}
\la{ide}
\varepsilon := \frac{\sum_{w \in S_n} \tau^{l(w)}T_w}{\sum_{w \in
S_n} \tau^{2l(w)}} \quad  \text{and}\quad  
\varepsilon':= \frac{\sum_{w \in
S_{n-1}} \tau^{l(w)}T_w}{\sum_{w \in S_{n-1}} \tau^{2l(w)}} \ .
\end{equation}
Here $\,S_{n-1} \,$ is regarded as the subgroup of $ S_n $ which consists of permutations of 
$\,\{1,\,2,\, \ldots\, ,\,n\}$ fixing the first element. For $\,w \in S_n\,$, we write
$\,T_w := T_{i_1}...T_{i_{l(w)}}\,$, where $\,w=s_{i_1}\,\ldots\,s_{i_{l(w)}}\,$ is a reduced expression 
of $w$ and $\,s_i:=(i,i+1) \in S_n\,$.

The following proposition is analogous to \cite{EG}, Theorem~1.23 and Theorem~1.24, in the case 
of the rational Cherednik algebra.
\begin{proposition}[see \cite{O}, Theorem~5.1 and Theorem~6.1] 
\la{oblom} \hfill

$(1)$\ The algebra $\,\varepsilon \, H_{q,n}\, \varepsilon\,$ is Morita equivalent to $\, H_{q,n}\,$, the equivalence
$\,\Mod(H_{q,n}) \to \Mod(\varepsilon \, H_{q,n}\, \varepsilon)\,$ being the canonical functor $\,\varepsilon:\,
M \mapsto \varepsilon\,M \,$.

$(2)$ $\,\varepsilon \, H_{q,n}\, \varepsilon\,$ is a commutative algebra isomorphic to the coordinate 
ring of $\, \CC_n^q $.
\end{proposition}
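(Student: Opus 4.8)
The plan is to prove the two assertions by separate standard arguments: part $(1)$ by the Morita-theoretic fullness criterion for an idempotent, and part $(2)$ by a classical-limit computation together with an identification of the spherical generators with the invariant functions cutting out $\CC_n^q$.

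For $(1)$, I would start from the elementary fact that, for an idempotent $\varepsilon$ in a ring $H := H_{q,n}$, the functor $M \mapsto \varepsilon M$ is an equivalence $\Mod(H) \stackrel{\sim}{\to} \Mod(\varepsilon H \varepsilon)$ exactly when $\varepsilon$ is full, i.e. $H\varepsilon H = H$; so everything reduces to this fullness. Under assumption \eqref{nu} the Hecke parameter $\tau = q^{-1/2}$ is not a root of unity, hence the finite Hecke algebra $\mathcal{H}_{q^{-1/2}}(S_n) \subset H$ is split semisimple and $\varepsilon$ is the nonzero primitive idempotent projecting onto its trivial isotypic component. At the classical value $t=1$ the algebra $H$ is a finite module over its (large) center, so its simple modules are finite-dimensional; a short argument then shows that $H\varepsilon H = H$ is equivalent to $\varepsilon L \neq 0$ for every simple $H$-module $L$ (a nonzero quotient $H/H\varepsilon H$ would carry a simple module $L$ with $\varepsilon L = 0$, and conversely any such $L$ survives the quotient). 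I would finish $(1)$ by checking $\varepsilon L \neq 0$ on each simple $L$: using the triangular (PBW) decomposition of $H$ and the resulting lowest-weight structure, the restriction of $L$ to $\mathcal{H}_{q^{-1/2}}(S_n)$ always contains the trivial isotype, on which $\varepsilon$ acts as the identity.

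For $(2)$ the two points are commutativity and the identification with $\c[\CC_n^q]$. Commutativity is the special feature of the classical value $t=1$, and I would obtain it by viewing $H_{q,n} = \H_{t,q^{-1/2}}(S_n)$ inside the one-parameter family in $t$: for spherical elements $a = \varepsilon a \varepsilon$ and $b = \varepsilon b \varepsilon$ the commutator $[a,b]$ is divisible by $(t-1)$ (it is the leading, Poisson-type term of the family) and hence vanishes at $t=1$. Concretely I would take $a,b$ among the $\varepsilon$-symmetrizations of $\c[X_1^{\pm1},\dots,X_n^{\pm1}]^{S_n}$ and $\c[Y_1^{\pm1},\dots,Y_n^{\pm1}]^{S_n}$, which I would also show generate $\varepsilon H \varepsilon$ using the PBW decomposition. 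To build the isomorphism $\varepsilon H \varepsilon \cong \c[\CC_n^q]$ I would send the $\GL_n$-invariant generators $\tr(X^{a}Y^{b}\cdots)$ and $j\,X^{a}Y^{b}\,i$ of $\c[\tilde{\CC}^q_n]^{\GL_n}$ to the matching spherical elements assembled from the $X_i$, $Y_i$ and $\varepsilon$, and verify that the defining relation \eqref{qrone}, namely $qXY - YX + ij = 0$, is respected. Bijectivity would follow by comparing the associated graded rings, for the PBW filtration on $\varepsilon H \varepsilon$, with the coordinate ring of the corresponding commuting-variety degeneration of $\CC_n^q$.

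The main obstacle lies in part $(2)$: establishing commutativity cleanly (isolating the $(t-1)$-divisibility of spherical commutators) and, above all, matching generators and relations tightly enough to conclude that the map to $\c[\CC_n^q]$ is an isomorphism and not merely a surjection — this is where the degeneration and associated-graded comparison do the real work. In part $(1)$ the only delicate step is the nonvanishing $\varepsilon L \neq 0$ on every simple module, which I expect the lowest-weight theory afforded by the PBW decomposition to settle without difficulty.
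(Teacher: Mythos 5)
First, note that the paper does not prove this proposition at all: it is quoted verbatim from Oblomkov \cite{O} (Theorems~5.1 and 6.1), so your attempt has to be measured against the strategy of that cited source (which is the multiplicative analogue of Etingof--Ginzburg's argument for rational Cherednik algebras at $t=0$). Against that standard, the decisive step of your part $(1)$ has a genuine gap. You reduce fullness correctly to showing $\varepsilon L \neq 0$ for every simple $L$, but you then claim this follows from the PBW decomposition, the semisimplicity of $\mathcal{H}_{q^{-1/2}}(S_n)$, and ``lowest-weight structure.'' This argument proves too much: every ingredient you invoke (finiteness over a large center, triangular decomposition, semisimple finite Hecke algebra) also holds for the affine Hecke subalgebra $\c[X_1^{\pm 1},\dots,X_n^{\pm 1}]\otimes \mathcal{H}_{q^{-1/2}}(S_n)$, and yet that algebra has Steinberg-type one-dimensional simple modules (e.g.\ for $n=2$: $T\mapsto -q^{1/2}$, $X_2 \mapsto q\,X_1$) on which $\varepsilon$ acts by zero. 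What rules such modules out for the full DAHA at $t=1$ is precisely the hypothesis \eqref{nu}, which your part $(1)$ never uses in an essential way; indeed fullness genuinely fails at roots of unity. The correct route (Oblomkov's, following Etingof--Ginzburg) cannot decouple $(1)$ from $(2)$: one first identifies the center and the spherical subalgebra with $\c[\CC_n^q]$, uses smoothness of $\CC_n^q$ --- this is where \eqref{nu} enters --- to show every fiber of $H_{q,n}$ over a closed point of $\Spec(\varepsilon H_{q,n}\varepsilon)$ is the full matrix algebra $\Mat_{n!}(\c)$, and only then deduces $H\varepsilon H = H$ fiberwise plus Nakayama.

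Part $(2)$ as you outline it also has two gaps. The commutativity argument is circular: asserting that spherical commutators are divisible by $(t-1)$ is just a restatement of the commutativity of $\varepsilon H_{q,n}\varepsilon$ at $t=1$, and no formal deformation principle supplies it (the full DAHA at $t=1$ is certainly not commutative, so divisibility is not automatic for commutators in a flat family). The actual mechanism is that at $t=1$, and only at $t=1$, the subalgebra $B := \c[X^{\pm 1}]^{S_n}\otimes\c[Y^{\pm 1}]^{S_n}$ becomes central (the symmetric functions in the $X_i$ commute with the $T_j$ by Bernstein's relations and with $\pi$ because $\pi$ permutes the $X_i$ cyclically when no $q$-twist is present); then $\varepsilon H\varepsilon$ is a torsion-free finite $B$-module, hence embeds into its generic fiber, whose commutativity is established by explicit analysis of generic weight spaces. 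Finally, your plan to define the isomorphism with $\c[\CC_n^q]$ by sending trace-type generators to spherical elements and ``verifying \eqref{qrone}'' underestimates the problem: a homomorphism out of the invariant ring $\c[\tilde\CC_n^q]^{\GL_n}$ must respect \emph{all} relations among those generators (Procesi--Razmyslov-type trace identities), not just the single defining equation of $\tilde\CC_n^q$. The cited proof goes the other way, geometrically: a character $\chi$ of the (already known to be commutative) spherical algebra corresponds, via the Morita equivalence of part $(1)$, to a simple $H$-module whose $\varepsilon'$-isotypic component is $n$-dimensional and carries the operators $X_1,Y_1$ satisfying the rank-one condition \eqref{qrone'}, and this assignment $\Spec(\varepsilon H\varepsilon)\to\CC_n^q$ is then shown to be an isomorphism. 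So the two halves of the proposition have to be proved together, and the hypothesis \eqref{nu} must appear at the core of the argument, not merely in the semisimplicity of the finite Hecke algebra.
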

Next, we introduce a multiplicative version of the deformed preprojective algebra of a quiver $Q$,
due to Crawley-Boevey and Shaw \cite{CBS} (see also \cite{vdB}). Our definition is slightly more general than 
that of \cite{CBS} as we assign weights to both the vertices and the arrows of $Q$ (see Remark below). 
\subsection{The (generalized) multiplicative preprojective algebras} Let $ Q $ be a quiver with vertex set $I$.
Let $ \bar{Q} $ be the double of $ Q $ obtained by adjoining a reverse arrow $ a^* $ to each arrow $ a \in Q $.
As in \cite{CBS}, we extend $\,a \mapsto a^* \,$ to an involution on $ \bar{Q} $ by letting
$\, (a^*)^* = a \,$, and define the function $\,\epsilon: \bar{Q} \to \{\pm 1\} \,$ by $\,\epsilon(a) = 1 \,$
if $ a \in Q $ and $\,\epsilon(a) = -1 \,$ if $ a^* \in Q $. Next, we choose two sets of parameters (weights):
$\,\{q_v\}_{v \in I} \,$ and $\,\{\hbar_a\}_{a\in \bar{Q}}\,$ with the assumption that $\, \hbar_{a^*} = \hbar_a \,$ 
for all $\, a \in Q \,$. The {\it multiplicative preprojective algebra} $\,\Lambda^{q,\hbar}(Q)\,$ is now defined by the 
algebra homomorphism $\,\c\bar{Q} \to \Lambda^{q,\hbar}(Q)\,$, which is {\bf universal} among all algebra homomorphisms
$\,\c\bar{Q} \to R\,$, satisfying the properties
\begin{equation*}
\la{pp1}
a a^* + \hbar_a \ \mbox{is a unit in} \ R\ \mbox{for all}\ a \in \bar{Q}\ ,
\end{equation*}
\begin{equation*}
\la{pp2}
\prod_{a \in \bar{Q}} (a a^* + \hbar_a)^{\epsilon(a)} = \sum_{v \in I}\,q_v\,e_v\quad \mbox{in}\ R\ .
\end{equation*}

\begin{remark}
The original definition of multiplicative preprojective algebras (see \cite{CBS}, Definition~1.2) 
corresponds to the choice $\,\hbar_a = 1 \,$ for all $ a \in \bar{Q} $. To see why we need an extension
of this definition consider a quiver $ Q $ which consists of a single vertex $v$ and a single loop $ a $.  
Choosing then $\, q_v = q \,$ and $\, \hbar_a = 0 \,$, we get the algebra $\,\Lambda^{q,\hbar}(Q)\,$
isomorphic to $ A_q $ (with $\, a\, \leftrightarrow x \,$ and $\,a^* \leftrightarrow y\,$).
On the other hand, Example~1.3 in \cite{CBS} shows that if $ \, q_v = q \,$ and $\, \hbar_a = 1 \,$,
then $\,\Lambda^{q,\hbar}(Q) \cong \c \langle x,\,y,\,(1+xy)^{-1} \rangle/(xy-qyx-1)\,$, which
is a different quantized version of the first Weyl algebra, not isomorphic to $A_q$ (see, e.g., \cite{AD2}).
\end{remark}

\vspace{1ex}

Now, as in \cite{BCE}, Sect.~2.2, we consider the {\it framed} one-loop quiver
$\, Q = Q_\infty \,$ with two vertices, $\,I = \{0,\,\infty\}\,$, and two arrows
$\,i:\, 0 \rightarrow \infty\,$ and $\, X:\,0 \rightarrow 0\,$. Write $\, j:=i^*\,$ and
$\,Y:=X^*\,$ for the reverse arrows in $\,\bar{Q}\,$. For the vertex and arrow weights, we 
take $\,(q_0,\,q_\infty) = (q,\,q^{-n})\,$ and $\,(\hbar_X,\,\hbar_i) = (0,1)\,$, respectively. 
The corresponding algebra $\, \Lambda := \Lambda^{q,\hbar}(Q)\,$ can then be identified with
the quotient of $\,\c\bar{Q}\langle U, V \rangle := \c \bar{Q} \ast 
\c\langle U, V \rangle\,$ modulo the relations
\begin{eqnarray*}
&&  U = eUe\ , \quad V= eVe\ ,\quad XU = UX = e\ , \quad YV=VY=e\ ,\\
&& XY-qYX-qYXij=0\ , \quad ji=(q^{-n}-1)e_{\infty}\ ,
\end{eqnarray*}
where $\,e\,$ and $\,e_\infty\,$ are the idempotents corresponding to the vertices 
$0$ and $ \infty $, respectively. The following lemma clarifies the relation between
this algebra and $A_q $: it is a "multiplicative" analogue of \cite{BCE}, Lemma~3.
\blemma
\la{laq}
$A_q$ is isomorphic to the quotient of $ \Lambda $ by the ideal generated by $ \ei $.
\elemma

\noindent
In fact, the required isomorphism is induced by 
$\,\c\langle \x, \y\rangle \to \Lambda/\langle \ei \rangle\,$,
$\,x \mapsto X \,$, $\,y \mapsto Y\,$.

Next, we explain the relation between $ \Lambda $ and the Cherednik algebra $\, H := 
H_{q,n} $. To this end, we consider the left projective $H$-module 
$\, P:= H \varepsilon' \oplus H \varepsilon \,$, where $\,\varepsilon\,$ and
$\,\varepsilon'\,$ are the idempotents defined in \eqref{ide}. The endomorphism ring 
of $\,P\,$ can be identified with a matrix algebra:
$$
\End_H(P) = \left(
                        \begin{array}{lr}
                          \lldaha & \lrdaha \\
                          \rldaha & \rrdaha \\
                        \end{array}
                      \right)\ .
$$
Using this identification, we can define an algebra map 
$\, \Theta:\, \c\bar{Q}\langle U, V \rangle \to \End_{H}(P)^{\circ}\,$ by
\begin{equation*}
X \mapsto \left(
                           \begin{array}{cc}
                             X_1\varepsilon' & 0\\
                             0 & 0 \\
                           \end{array}
                         \right)\ ,\quad
Y \mapsto \left(
                           \begin{array}{cc}
                             Y_1\varepsilon' & 0\\
                             0 & 0 \\
                           \end{array}
                         \right)\ ,\quad
i \mapsto \left(
                \begin{array}{cc}
                  0 & 0 \\
                  \varepsilon & 0 \\
                \end{array}
              \right)\ ,\quad 
j \mapsto \left(
                \begin{array}{cc}
                  0 & \varepsilon \\
                  0 & 0 \\
                \end{array}
              \right) (q^{-n}-1) .
\end{equation*}
The following proposition shows that $ \Theta $ is a multiplicative analogue of the map 
$\, \Theta^{\rm quiver} \,$ constructed in  \cite{EGGO} (see {\it loc. cit.}, (1.6.3)).
\begin{proposition} 
\la{pbimod} 
The map $ \Theta $ induces an algebra homomorphism $\,\Lambda \to \End_H(P)^{\circ} $.
\end{proposition}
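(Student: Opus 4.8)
The plan is to verify that the assignment $\Theta$ on the free generators $X, Y, U, V, i, j$ respects the defining relations of $\Lambda$, so that it factors through the quotient $\c\bar{Q}\langle U,V\rangle \onto \Lambda$. Since $\Theta$ is already defined as an algebra map out of the free product $\c\bar{Q}\langle U,V\rangle$, and $\Lambda$ is presented as a quotient of this free product by the explicit relations listed just before Lemma~\ref{laq}, it suffices to check that each of those relations maps to zero (resp. to the identity) under $\Theta$. Concretely, I would verify in $\End_H(P)^{\circ}$:
\begin{equation*}
U = eUe,\quad V = eVe,\quad XU=UX=e,\quad YV=VY=e,\quad XY-qYX-qYXij=0,\quad ji=(q^{-n}-1)e_{\infty}.
\end{equation*}

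\emph{First} I would pin down the images of the idempotents: under $\Theta$ the vertex idempotent $e$ at $0$ should go to the diagonal matrix unit $\bigl(\begin{smallmatrix}\varepsilon' & 0\\ 0 & 0\end{smallmatrix}\bigr)$ and $e_{\infty}$ to $\bigl(\begin{smallmatrix}0 & 0\\ 0 & \varepsilon\end{smallmatrix}\bigr)$. The relations $U=eUe$, $V=eVe$, $XU=UX=e$, $YV=VY=e$ then say that $X$ and $Y$ act as mutually inverse units in the corner ring $\varepsilon' H \varepsilon'$; since the elements $X_1, Y_1 \in H_{q,n}$ are invertible and commute with $\varepsilon'$ (being fixed by the symmetrizer over $S_{n-1}$, which stabilizes the first index), these are straightforward corner-algebra identities. \emph{Next} I would verify $ji=(q^{-n}-1)e_{\infty}$: composing the two off-diagonal matrices $j$ and $i$ lands in the lower-right corner $\varepsilon H \varepsilon$ and produces the scalar $(q^{-n}-1)\varepsilon$, exactly matching the image of $(q^{-n}-1)e_{\infty}$. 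Because we are working in the \emph{opposite} algebra $\End_H(P)^{\circ}$, I would be careful to reverse the order of composition throughout.

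\emph{The hard part will be} the commutation relation $XY-qYX-qYXij=0$, which is the multiplicative preprojective relation at the vertex $0$ and is the only genuinely nontrivial computation. Projected into the corner $\varepsilon' H \varepsilon'$, this identity must be deduced from the DAHA relations governing $X_1$ and $Y_1$ together with the behavior of the idempotents $\varepsilon, \varepsilon'$. I would expand $ij$ as the product of the off-diagonal matrices, which contributes a rank-one correction supported at vertex $0$ measuring the failure of $X_1$ and $Y_1$ to $q$-commute inside $\varepsilon' H \varepsilon'$; the point is that in the full algebra $H_{q,n}$ the elements $X_1$ and $Y_1$ satisfy a quantized commutation relation whose defect is precisely captured by passing between the idempotents $\varepsilon'$ and $\varepsilon$. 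This is the multiplicative analogue of the computation carried out in \cite{EGGO} for the map $\Theta^{\rm quiver}$, and I would model the argument on \emph{loc. cit.}, (1.6.3), translating their additive preprojective relation into the present multiplicative form. \emph{Once} all listed relations are confirmed to hold in $\End_H(P)^{\circ}$, the universal property of the quotient $\c\bar{Q}\langle U,V\rangle \onto \Lambda$ gives the desired factorization $\Lambda \to \End_H(P)^{\circ}$, completing the proof.
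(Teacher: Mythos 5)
Your proposal is correct and coincides with the paper's intended argument: the paper explicitly dismisses this proof as ``a routine calculation which we leave to the interested reader,'' and your outline is precisely that calculation. You correctly identify the images of the idempotents $e$ and $e_\infty$, the order-reversal bookkeeping required in $\End_H(P)^{\circ}$, the fact that $X_1,Y_1$ are invertible and commute with $\varepsilon'$ (which settles the corner relations involving $U,V$), and the multiplicative preprojective relation at the vertex $0$ as the only nontrivial DAHA identity, to be verified along the lines of the map $\Theta^{\mathrm{quiver}}$ of \cite{EGGO} --- exactly the comparison the paper itself makes.
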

The proof of Proposition~\ref{pbimod} is a routine calculation which we leave to the interested reader. 

\subsection{The Calogero-Moser functor} 
One way to interpret Proposition~\ref{pbimod} is to say that $\,P\,$ is an $H$-$\Lambda$-bimodule, with
right $\Lambda$-module structure defined by $ \Theta $. In combination with Lemma~\ref{laq}, this allows us 
to define the functor
\begin{equation}
\la{cmf}
\CM_n:\ \D^b(\Mod\,H) \to \D^b(\Mod\,\Lambda) \to \D^b(\Mod\,A_q)\ ,\quad V \mapsto (V \otimes_{H} P) \Lotimes_{\Lambda} A_q\ .
\end{equation}
Note that  $ P $ is a projective module on the left, so tensoring with $ P $ over $ H $ is an 
exact functor. 

In view of Proposition~\ref{oblom}, Theorem~\ref{MTh} of Section~\ref{S2} is a consequence of the following result.
\bthm
\la{MTh2}
$(1)$ The functor $ \CM_n $ transforms the simple $H$-modules $($viewed as $0$-complexes in $\D^b(\Mod\,H))$
to rank $1$ projective $A_q$-modules $($located in homological degree $-1)$.

$(2)$ Two simple $H$-modules $\,V = (V;\, X_i,\,T_j,\,\pi)\,$ and $\,V' = (V';\, X_i',\,T_j',\,\pi')\,$
correspond to isomorphic $A_q$-modules if and only if there is $\,(k,\,m) \in \Z^2 \,$ such that
$$
X_i' = q^k X_i\ ,\quad T_j' = T_j\ ,\quad \pi' = q^m \pi\ .
$$
$($We call such $H$-modules \,{\rm equivalent}.$)$

$(3)$ For every rank 1 projective $A_q$-module, there is a unique $\, n \in \N \,$ and a simple module $V$
over $ H_{q,n} $ such that $\,\CM_n(V) \cong M[1]\,$ in $\,\D^b(\Mod\,A_q)\,$.
\ethm
Thus the Calogero-Moser map $ \omega $ of Theorem~\ref{MTh} is induced by the functors $\,\CM_n\,$
`amalgamated' over all $n$. The proof of Theorem~\ref{MTh2} is analogous to \cite{BCE}, Theorem~3:
it is based on \cite{BC}, Theorem~3, and the following key lemma. 

To simplify the notation we set $\,R := \c\langle x^{\pm 1}, y^{\pm 1} \rangle\,$ and denote by $\,\alpha \,$ 
the (surjective) algebra homomorphism $\, R \to e \Lambda e\,$ taking 
$\,x,\, x^{-1},\, y,\, y^{-1}\,$ to $\,X,\, U,\, Y,\, V\,$, 
respectively. Using this homomorphism, we define the linear map
\begin{equation*}
\chi:\ R \to \rrdaha \ ,\quad
r \mapsto \Theta(i\,\alpha(r)\,j) \ .
\end{equation*}
\begin{lemma} 
\la{CM2} Regarding $ P $ as a right $\Lambda$-module, we have

$(1)$\ $\, \Tor^{\Lambda}_k(P,\, A_q) = 0 \,$ for all $\, k \ne 1 $.

$(2)$  For $\, k = 1 $, there is an isomorphism of right $ A_q$-modules
\begin{equation} 
\la{cmfunc1}
\Tor^{\Lambda}_1(P,\, A_q) 
\cong 
\Ker \left(\frac{H \varepsilon \otimes R}{H (\varepsilon
\otimes r w - \chi(r) \otimes 1) R }
\xrightarrow{\ \mu\ } H \varepsilon' \right) \ ,
\end{equation} 
where $\,w = q^{-1} (x^{-1}y^{-1}xy)-1\,$.
\end{lemma}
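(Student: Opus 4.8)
The plan is to compute $\Tor^\Lambda_\ast(P,A_q)$ by resolving $A_q$ as a left $\Lambda$-module and applying $P\otimes_\Lambda(-)$. First I would record the two identities in $\Lambda$ that drive the whole computation: the defining relation of $\Lambda$ gives $\alpha(w)=q^{-1}X^{-1}Y^{-1}XY-1=ij$, while the framing relation is $ji=(q^{-n}-1)\ei$; thus the arrows $i,j$ implement the relation $w$ cutting $A_q$ out of $R$. By Lemma~\ref{laq}, $A_q=eA_q$ is generated by the class of $e$, so its projective cover is $\epsilon\colon\Lambda e\onto A_q$, $e\mapsto\bar 1$, with $\Ker\epsilon=\Lambda\ei\Lambda e$. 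Since every path $\infty\to 0$ factors through $j$, one has $\ei\Lambda e=j\,\alpha(R)$, giving a surjection of left $\Lambda$-modules $\beta\colon\Lambda\ei\otimes_\c R\to\Ker\epsilon$, $\ei\otimes r\mapsto j\,\alpha(r)$, and hence the beginning $\Lambda\ei\otimes_\c R\xrightarrow{\ \beta\ }\Lambda e\xrightarrow{\ \epsilon\ }A_q\to 0$ of a resolution.

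Next I would identify the first syzygies. Using the two identities, $\beta\big((j\alpha(r)i)\otimes 1\big)=j\alpha(r)(ij)=j\alpha(r)\alpha(w)=\beta(\ei\otimes rw)$, so $z_r:=(j\alpha(r)i)\otimes 1-\ei\otimes rw$ lies in $\Ker\beta$; the claim (the homological crux) is that the $z_r$ and their right $R$-translates generate $\Ker\beta$ and that $\Ker\beta$ is projective, so the resolution has length $2$ (the multiplicative preprojective algebra $\Lambda$ being homologically two-dimensional). Now apply $P\otimes_\Lambda(-)$: using $Pe=H\varepsilon'$ and $P\ei=H\varepsilon$, together with the fact that the loop $j\alpha(r)i\in\ei\Lambda\ei$ acts on $P$ as $\chi(r)=\Theta(i\alpha(r)j)\in\rrdaha$, the complex in degrees $\le 1$ collapses to $H\varepsilon\otimes R\xrightarrow{\ \mu\ }H\varepsilon'$, while the image of the degree-$2$ differential becomes exactly $H(\varepsilon\otimes rw-\chi(r)\otimes 1)R$. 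Reading off homology then yields $\Tor^\Lambda_0(P,A_q)=\Coker\mu$ and $\Tor^\Lambda_1(P,A_q)\cong\Ker\big(N\xrightarrow{\ \mu\ }H\varepsilon'\big)$ with $N$ as in \eqref{cmfunc1}, which is part~$(2)$; the right $R$-action descends to the right $A_q$-action on $\Tor_1$.

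Finally, for part~$(1)$ I would argue $\Tor_k=0$ for $k\neq 1$. The length-two resolution kills $\Tor_{\ge 3}$, so it remains to see $\Tor_0=\Coker\mu=0$ and $\Tor_2=0$. The former is equivalent to surjectivity of $\mu$, i.e.\ to $P=P\ei\cdot\Lambda$ — that $H\varepsilon'$ is generated from $H\varepsilon$ under the $\Theta$-action — and the latter to injectivity of $P\otimes_\Lambda\Ker\beta\to P\otimes_\Lambda(\Lambda\ei\otimes_\c R)$; both are statements about the module structure of $H_{q,n}$ and are where \cite{BC}, Theorem~3 enters. The main obstacle is precisely this exactness bookkeeping: proving that the $z_r$ exhaust $\Ker\beta$ (no spurious relations among the elements $j\alpha(r)$) and the concomitant vanishing of $\Tor_0$ and $\Tor_2$. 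Once the resolution is known to be exact of length $2$, the identification in part~$(2)$ follows directly from the collapse computed above.
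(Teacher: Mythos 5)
Your setup for part $(2)$ --- the partial resolution $\,\Lambda\ei\otimes_\c R \xrightarrow{\ \beta\ } \Lambda e \onto A_q\,$, the syzygies $z_r$, and the collapse of $P\otimes_\Lambda(-)$ to the two-term complex $\,H\varepsilon\otimes R \to H\varepsilon'\,$ --- is sound and is essentially the route the paper intends: for part $(2)$ the paper simply refers to the proof of Theorem~1 of \cite{BCE}, which proceeds in exactly this way. The genuine gap is in part $(1)$. You reduce it to the claims that the $z_r$ and their $R$-translates exhaust $\Ker\beta$, that $\Coker\,\mu=0$, and that $\Tor_2=0$, and for the vanishing statements you offer only the remark that they are ``statements about the module structure of $H_{q,n}$'' where \cite{BC}, Theorem~3 enters. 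That citation does not do this work: in the paper, \cite{BC}, Theorem~3 is an input to Theorem~\ref{MTh2} (simplicity of the DAHA-modules and the classification), not to the Tor computation, and no actual argument for either vanishing appears in your proposal.

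The paper proves part $(1)$ by two arguments absent from your plan. For $\,\Tor_0 = P\otimes_\Lambda A_q\,$: identifying $\,\rrdaha\cong\O(\CC_n^q)=:\O\,$, for each maximal ideal $\,\m\subset\O\,$ the fiber $\,(\O/\m)\otimes_\O(\varepsilon P)\,$ is an $n$-dimensional right $\Lambda$-module, so $\,(\O/\m)\otimes_\O(\varepsilon P)\otimes_\Lambda A_q\,$ is a finite-dimensional right $A_q$-module and hence zero ($A_q$ is simple and infinite dimensional); since $\,\varepsilon P\otimes_\Lambda A_q\,$ is a coherent $\O$-module, Nakayama's Lemma gives $\,\varepsilon P\otimes_\Lambda A_q=0\,$, and the Morita equivalence of Proposition~\ref{oblom}$(1)$ upgrades this to $\,P\otimes_\Lambda A_q=0\,$. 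For the higher Tor's: the natural map $\,\Lambda\ei\Lambda\to\Lambda\,$ is a projective resolution of $A_q$, so $A_q$ has projective dimension $1$ as a left $\Lambda$-module and $\,\Tor_k=0\,$ for all $k\ge 2$ at once. Note that this second fact also corrects your framing: once $\,\mathrm{pd}_\Lambda(A_q)\le 1\,$ is known there is no separate ``$\Tor_2$'' issue, and since $\Ker(\Lambda e\onto A_q)=\Lambda\ei\Lambda e$ is then projective, $\beta$ splits and $\Ker\beta$ is automatically projective --- so the only thing still requiring the \cite{BCE}-style computation is the identification of the generators of $\Ker\beta$ needed for the explicit formula \eqref{cmfunc1}. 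Without these two arguments (or substitutes for them), your proposal establishes neither half of part $(1)$.
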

The notation of Lemma~\ref{CM2} needs some explanation. The multiplication-action map $ \mu $ in \eqref{cmfunc1} 
is induced by $\,\mu(a \varepsilon \otimes r)= a \varepsilon\,\pi(r) \,$,
where $\, a \varepsilon\,$ is viewed as an element in the direct summand of $ P $ (note that 
$ a \varepsilon = a \varepsilon \varepsilon'$). The $A_q$-module structure on the right 
hand side of \eqref{cmfunc1} descends then from the natural right
$R$-module structure on $\Ker\,\mu\,$. 
\bproof
$(1)$ By Proposition~\ref{oblom}$(2)$, $\, \rrdaha \cong \O(\CC_n^q) \,$. Write $\,\O := \O(\CC_n^q)\,$.
Then, for every maximal ideal $ \m \subset  \O $, $\,(\O/\m) \otimes_{\O} (\varepsilon P)\,$ is a right
$\Lambda$-module of dimension $\,n\,$ over $\c$. It follows that $\,(\O/\m) \otimes_{\O} (\varepsilon P)
\otimes_{\Lambda} A_q\,$ is a finite dimensional right $A_q$-module and hence zero. Since 
$\,\varepsilon P \otimes_{\Lambda} A_q\,$ is a coherent $\O$-module,  
this implies that $\, \varepsilon P \otimes_{\Lambda} A_q = 0 \,$ by Nakayama's Lemma. But then,
by Morita equivalence of Proposition~\ref{oblom}$(1)$, $\,P \otimes_{\Lambda} A_q = 0\,$.
To see that the second and higher Tor's vanish we observe that the natural map 
$\,\Lambda e_{\infty} \Lambda \rightarrow \Lambda \,$ provides a left (and right) projective 
resolution of $A_q$, so $\, A_q \,$ has projective dimension $ 1 $ as a left $\Lambda$-module.

The proof of part $(2)$ is similar to the proof of Theorem~1 in \cite{BCE}. We leave it 
to the reader.
\end{proof}

\subsection{Equivariance} 
We now turn to the proof of Proposition~\ref{Teqv}. Recall that 
$\,\SL_2(\Z)\,$  is a quotient of the braid group $B_2$. Let $\,\phi:\,B_2 \rightarrow \SL_2(\Z)\,$ 
denote the corresponding projection
$$
g_1 \mapsto \left(
                   \begin{array}{cc}
                     1 & 1 \\
                     0 & 1 \\
                   \end{array}
                 \right) \ , \qquad 
g_2 \mapsto \left(
                                                   \begin{array}{cc}
                                                     1 & 0 \\
                                                     -1 & 1 \\
                                                   \end{array}
                                                 \right)\ .
$$
The $\SL_2(\Z)$-action on $\,(\c^*)^2\,$ defined by $\,\eqref{actt}\,$ induces
(via $\phi$) a $B_2$-action on $(\c^*)^2$. We write $\,G := (\c^*)^2 \rtimes B_2\,$ 
for the corresponding semidirect.
\begin{lemma}[cf. \cite{Che}] 
\la{gpact1}
The following assignment extends to a well-defined group homomorphism 
$\,\Phi_{H}: \, G \to \Aut_{\c}(H_{q,n}) \,$:
\begin{eqnarray*}
(\alpha,\beta; \text{id}) &\mapsto& \nu_{\alpha,\beta}:= (X_i \mapsto \alpha X_i\,\,,\,\,Y_i 
\mapsto \beta Y_i \,\,,\,\,T_i \mapsto T_i)\ ,  \\
(1,1;g_1) &\mapsto& \tau:= (X_i \mapsto Y_iX_i\,\,,\,\,Y_i \mapsto Y_i \,\,,\,\, T_i \mapsto T_i)\ ,\\
(1,1;g_2) &\mapsto& \theta:= (X_i \mapsto X_i\,\,,\,\,Y_i \mapsto X_i^{-1}Y_i \,\,,\,\, T_i \mapsto T_i) \ .
\end{eqnarray*}
\end{lemma}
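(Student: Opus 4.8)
The plan is to verify that the three assignments respect the defining relations of $H_{q,n}$ and of the group $G$. I would proceed in two logically separate stages. First, for each of the three generators of $G$ --- the torus element $\nu_{\alpha,\beta}$, the braid generator $\tau$ (the image of $g_1$), and $\theta$ (the image of $g_2$) --- I would check that the prescribed assignment on the generators $X_i, Y_i, T_i, \pi$ extends to a genuine algebra automorphism of $H_{q,n}$. Since the presentation of $H_{q,n}$ is most naturally written in terms of $X_i, T_i, \pi$ (the $Y_i$ being derived elements), the cleanest route is to re-express each map on the $\pi$-generator as well: because $Y_i$ is built from the $T_j$ and $\pi^{-1}$, and each map fixes all $T_j$, the action on $Y_i$ is determined by the action on $\pi$. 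I would therefore translate $\tau$ and $\theta$ into their effect on $\pi$ and then check the Hecke, affine, and quadratic relations directly. The quadratic relation $(T_i-q^{-1/2})(T_i+q^{-1/2})=0$ is automatic since all $T_i$ are fixed; the braid relations among the $T_i$ are likewise automatic; the genuinely substantive checks are the cross-relations $T_i X_i T_i = X_{i+1}$, $\pi X_i = X_{i+1}\pi$, $\pi T_i = T_{i+1}\pi$, and $\pi^n T_i = T_i \pi^n$, together with their $Y$-analogues $T_i Y_{i+1} T_i = Y_i$.

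Second, I would verify that these three automorphisms satisfy the relations of $G = (\c^*)^2 \rtimes B_2$. This splits into three pieces: (i) the homomorphism property of $\nu$, i.e. $\nu_{\alpha,\beta}\circ\nu_{\alpha',\beta'} = \nu_{\alpha\alpha',\beta\beta'}$, which is immediate from the diagonal scaling; (ii) the braid relation $\tau\theta\tau = \theta\tau\theta$ in $\Aut_\c(H_{q,n})$, which I would check by evaluating both composites on the generators $X_i$ and $Y_i$ (the $T_i$ being fixed throughout); and (iii) the semidirect-product compatibility, namely that conjugating $\nu_{\alpha,\beta}$ by $\tau$ and by $\theta$ reproduces the $\SL_2(\Z)$-action \eqref{actt} pulled back along $\phi$. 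Concretely, $\tau\,\nu_{\alpha,\beta}\,\tau^{-1}$ should equal $\nu_{\alpha\beta,\beta}$ (matching $g_1: (\alpha,\beta)\mapsto(\alpha\beta,\beta)$ under \eqref{actt}) and $\theta\,\nu_{\alpha,\beta}\,\theta^{-1}$ should equal $\nu_{\alpha,\alpha^{-1}\beta}$, and these follow by tracking how the scalars propagate through $X_i\mapsto Y_iX_i$ and $Y_i\mapsto X_i^{-1}Y_i$. Note that we do not need to impose the relation $(g_1g_2)^6=1$ at this stage, since we are mapping out of $B_2$ rather than $\SL_2(\Z)$; the descent to $\SL_2(\Z)$ is handled separately where needed.

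The main obstacle I expect is stage (ii), the braid relation $\tau\theta\tau=\theta\tau\theta$, because it requires computing the composite action on the $Y_i$, whose transformation law is the less transparent of the two. Under $\theta$ one has $Y_i \mapsto X_i^{-1}Y_i$, and applying $\tau$ afterwards mixes $X_i$ and $Y_i$ nontrivially, so one must carefully use the commutativity $X_iX_j=X_jX_i$, $Y_iY_j=Y_jY_i$ together with the interplay of $X_i$ and $Y_j$ mediated by the $T$-relations. The cleanest way to organize this is to record the induced action on the pair $(X_i,Y_i)$ as a matrix in $\SL_2(\Z)$ acting on the exponent lattice and observe that $\tau,\theta$ realize precisely the generators $g_1,g_2$, so that the braid relation in $\Aut_\c(H_{q,n})$ is inherited from the braid relation $g_1g_2g_1=g_2g_1g_2$ in $\SL_2(\Z)$; one still has to confirm that no anomalous scalar factors are introduced under composition, which is where the bulk of the routine verification lies. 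Since this lemma is explicitly flagged as \emph{cf.} Cherednik's construction in \cite{Che}, I would cite that source for the detailed verification and present here only the reduction to the relations above together with the key computation on $(X_i,Y_i)$.
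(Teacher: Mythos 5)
Your proposal is correct and takes essentially the same route as the paper: the paper likewise reduces the lemma to (a) each map being an automorphism, (b) the braid relation $\tau\theta\tau=\theta\tau\theta$, and (c) the semidirect-product compatibility, citing Cherednik (\cite{Che}, Theorem~4.3) for (a) and (b) and verifying (c) by the same scalar-propagation computation you describe. Your observation that only the braid relation, and not $(g_1g_2)^6=1$, needs to be checked--since $G$ is built from $B_2$ rather than $\SL_2(\Z)$--is precisely the point the paper's proof implicitly relies on.
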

\begin{proof}
It is obvious that $\,\nu_{\alpha,\beta} \in \Aut_{\c}(H_{q,n})\,$. The fact that  $\,\theta\,$ and
$\,\tau\,$ are also automorphisms, satisfying the braid
relation $\,\theta \tau \theta =\tau \theta \tau\,$, is part of \cite{Che}, Theorem~4.3. 
(In \cite{Che}, $\, \theta^{-1} \,$ and $\,\tau \,$ are denoted by $\,\tau_+\,$ and $\,\tau_{-}\,$, 
respectively). The following calculation now completes the proof:
\begin{eqnarray*}
(1,1;g_1)(\alpha,\beta;\text{id})(X_i,Y_i,T_i)&=& (\alpha\beta Y_i X_i, \beta Y_i, T_i)\\
                                              &=&(\alpha\beta,\beta;\text{id})(1,1;g_1)(X_i,Y_i,T_i)\\
                                              &=& (g_1(\alpha,\beta);g_1)(X_i,Y_i,T_i) \\*[1ex]
(1,1;g_2)(\alpha,\beta;\text{id})(X_i,Y_i,T_i)&=&(\alpha X_i, \alpha^{-1}\beta X_i^{-1}Y_i, T_i)\\
                                              &=&(\alpha,\alpha^{-1}\beta;\text{id})(1,1;g_2)(X_i,Y_i,T_i)\\
                                              &=&(g_2(\alpha,\beta);g_2)(X_i,Y_i,T_i)\ . \\
\end{eqnarray*}
\end{proof}
Note that any automorphism of $ H $ in the image of $ \Phi_{H} $ fixes the generators 
$\,T_1,...,T_{n-1}\,$. It follows that the $G$-action on $H$ defined by $ \Psi_H $ induces a 
$G$-action on the spherical algebra $\rrdaha$ and the left $\rrdaha$-module $\, \varepsilon P 
= \rldaha \,\oplus\, \rrdaha \,$. In other words, we have group homomorphisms 
$\,\Phi_{\rrdaha}:\, G \rightarrow \Aut_{\c}(\rrdaha)\,$ and 
$\,\Phi_{P}:\,G \rightarrow \Aut_{\c}(\varepsilon P)\,$. In addition, we have
\begin{lemma} 
\la{gpact2}
The following assignment extends to a well-defined group homomorphism $\, 
\Phi_{\Lambda}:\,G \to \Aut_\c(\Lambda)\,$:
\begin{eqnarray*}
(1,1;g_1) &\mapsto&  (X \mapsto XY \,\,,\,\,Y \mapsto Y\,\,,\,\,j \mapsto Y^{-1}j\,\,,\,\,i \mapsto i\,Y)\ , \\
(1,1;g_2) &\mapsto&  (X \mapsto X \,\,,\,\,Y \mapsto YX^{-1} \,\,,\,\,j \mapsto X\,j\,\,,\,\,i \mapsto i\,X^{-1})\ , \\
 (\alpha,\beta;\text{id}) &\mapsto& (X \mapsto \alpha X \,\,,\,\,Y
 \mapsto \beta Y\,\,,\,\,j \mapsto j \,\,,\,\,i \mapsto i)\ .
\end{eqnarray*}
\end{lemma}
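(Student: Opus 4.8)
The plan is to verify that the three generators given in Lemma~\ref{gpact2} satisfy the defining relations of the group $G = (\c^*)^2 \rtimes B_2$, and that they act as algebra automorphisms of $\Lambda$. Since $B_2 = \langle g_1, g_2\rangle$ is the free group on two generators with no relations beyond the semidirect-product relations coming from the $\SL_2(\Z)$-action on $(\c^*)^2$, the relations to check are: (i) each assignment respects the defining relations of $\Lambda$ (so that each is a well-defined endomorphism, in fact an automorphism of $\Lambda$); (ii) the toral part $(\alpha,\beta;\text{id})$ furnishes a homomorphism $(\c^*)^2 \to \Aut_\c(\Lambda)$; and (iii) the braid generators and torus interact exactly as prescribed by \eqref{actt}, i.e. $g_1(\alpha,\beta;\text{id})g_1^{-1} = (\alpha\beta,\beta;\text{id})$ and $g_2(\alpha,\beta;\text{id})g_2^{-1} = (\alpha,\alpha^{-1}\beta;\text{id})$.

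First I would treat step (i): recall from Section~\ref{S5} that $\Lambda$ is the quotient of $\c\bar Q\langle U,V\rangle$ by the relations $XU=UX=e$, $YV=VY=e$, $XY - qYX - qYXij = 0$, and $ji = (q^{-n}-1)e_\infty$ (together with the idempotent relations $U=eUe$, $V=eVe$). For each assignment in the lemma I would compute the image of each of these relations and check it vanishes. For instance, under $g_2$, one sends $X\mapsto X$, $Y\mapsto YX^{-1}$, and I would verify that the image of $XY-qYX-qYXij$ is again zero after substituting $j\mapsto Xj$, $i\mapsto iX^{-1}$; the key point is that the nonabelian terms conspire correctly because $ij$ and the commutator $XY-qYX$ transform in a compatible way. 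The map $U,V$ on inverses is then forced: since $X\mapsto X$ and $Y\mapsto YX^{-1}$ are sent to units, $U=X^{-1}$ and $V=Y^{-1}$ map to the corresponding inverses, which confirms these are automorphisms (the inverse assignment is again of the prescribed type). The relation $ji=(q^{-n}-1)e_\infty$ is preserved because the conjugating factors on $i$ and $j$ cancel. This is a direct but somewhat delicate computation, and I would record it for both $g_1$ and $g_2$; the torus case is trivial since scaling $X,Y$ manifestly preserves every relation.

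Next I would verify step (iii), the semidirect-product compatibility, by the same style of computation already displayed in the proof of Lemma~\ref{gpact1}: composing $\Phi_\Lambda(g_1)$ with $\Phi_\Lambda(\alpha,\beta;\text{id})$ on the generators $X,Y$ (and $i,j$) and comparing with $\Phi_\Lambda(g_1(\alpha,\beta);g_1)$, one finds the torus parameters transform according to $g_1:(\alpha,\beta)\mapsto(\alpha\beta,\beta)$ and $g_2:(\alpha,\beta)\mapsto(\alpha,\alpha^{-1}\beta)$, matching \eqref{actt}. Since $B_2$ is free, no braid relation among $g_1,g_2$ themselves needs checking, which is what makes this part short.

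The main obstacle I expect is step (i), specifically confirming that the quadratic-plus-correction relation $XY-qYX-qYXij=0$ is preserved under the two braid automorphisms, because that relation mixes the loop arrows $X,Y$ with the framing arrows $i,j$ nonlinearly. The reason the assignment is tailored the way it is — with $j\mapsto Y^{-1}j$, $i\mapsto iY$ for $g_1$ and $j\mapsto Xj$, $i\mapsto iX^{-1}$ for $g_2$ — is precisely to make the correction term transform covariantly with $XY-qYX$; I would organize the computation so that the conjugating factors on the $ij$ term are produced exactly by the transformation of the leading commutator, and then invoke compatibility with the framing relation $ji=(q^{-n}-1)e_\infty$ to close the check. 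Once all defining relations are shown to be preserved and each generator is seen to map to units with prescribed inverses, the assignment extends to a well-defined homomorphism $\Phi_\Lambda: G \to \Aut_\c(\Lambda)$ by the universal property of the semidirect product, completing the proof.
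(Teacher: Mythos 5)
Your overall strategy --- verify that each assignment preserves the defining relations of $\Lambda$, and that the braid and torus generators interact as prescribed by the semidirect product structure --- is exactly the ``direct calculation similar to that of Lemma~\ref{gpact1}'' that the paper itself invokes (the paper's proof is literally that one sentence). However, there is one genuine gap, and it comes from a false premise: you claim that $B_2=\langle g_1,g_2\rangle$ is the free group on two generators, so that ``no braid relation among $g_1,g_2$ themselves needs checking.'' In this paper $B_2$ is \emph{not} free: it is the group with presentation $\langle g_1,g_2 \mid g_1g_2g_1=g_2g_1g_2\rangle$ (the $3$-strand braid group in standard notation), of which $\SL_2(\Z)$ is the quotient by the extra relation $(g_1g_2)^6=1$; see the presentation \eqref{braid} and the projection $\phi$ defined just before Lemma~\ref{gpact1}. (Note also that the braid group on two strands in the standard sense is free on \emph{one} generator, not two, so the claim is wrong under either reading.) Consequently $G=(\c^*)^2\rtimes B_2$ carries the braid relation among its defining relations, and well-definedness of $\Phi_\Lambda$ requires checking that
$$
\Phi_\Lambda(g_1)\,\Phi_\Lambda(g_2)\,\Phi_\Lambda(g_1)\;=\;\Phi_\Lambda(g_2)\,\Phi_\Lambda(g_1)\,\Phi_\Lambda(g_2)
$$
in $\Aut_\c(\Lambda)$. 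This is precisely the step that could not be skipped in Lemma~\ref{gpact1} either: there the braid relation $\theta\tau\theta=\tau\theta\tau$ for the corresponding automorphisms of $H_{q,n}$ is one of the facts being proved, and it is quoted from Cherednik's Theorem~4.3 rather than dismissed.

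The gap is fillable, so your proof can be repaired by one more computation of the same kind you already propose. Writing $\sigma_k=\Phi_\Lambda(g_k)$, a direct check on generators shows that both triple products act by
$$
X \,\mapsto\, XYX^{-1}\ ,\qquad Y \,\mapsto\, X^{-1}\ ,\qquad i \,\mapsto\, i\,X^{-2}\ ,\qquad j \,\mapsto\, X^{2} j\ ,
$$
and they then automatically agree on $U,V$, since these are the inverses of the images of $X,Y$. What you correctly do \emph{not} need to verify is $(g_1g_2)^6=1$ --- that is exactly why the paper works with $B_2$ rather than with $\SL_2(\Z)$ at this stage. With the braid relation supplied, the remaining items in your outline (preservation of $XU=UX=e$, $YV=VY=e$, $XY-qYX-qYXij=0$, $ji=(q^{-n}-1)e_\infty$, and the two torus-conjugation identities matching \eqref{actt}) constitute the same verification the paper has in mind.
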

\begin{proof} 
A direct calculation similar to that of Lemma~\ref{gpact1}. We leave details to the reader.
\end{proof}

Note that the above action of $ G $ on $ \Lambda $ preserves the idempotents $ e $ and $ \ei $
and hence restricts to the subalgebra $\, e \Lambda e \subset \Lambda \,$. 
By Proposition~\ref{pbimod}, $\,\varepsilon P \,$ is an $\varepsilon H \varepsilon $-$\Lambda$-bimodule. 
The subspace $\,\varepsilon P e := \varepsilon\,P \otimes_\Lambda \Lambda e \subseteq \varepsilon P\,$
is preserved by any automorphism in $ \Aut_{\c}(\varepsilon P) $, which is in the image of $\Phi_{P} $. 
As a result, we have an action of $G$ on $\,\varepsilon P e\,$. Now, form the semidirect product $\,[\rrdaha \otimes (e
\Lambda e)^{\circ}] \rtimes G\,$, with $G$ acting diagonally on $\,\rrdaha\,$ and $\,e\Lambda e\,$ as 
in Lemma~\ref{gpact1} and Lemma~\ref{gpact2}.
\begin{proposition} 
\la{gpact3}
The action of $ G $ on $\,\varepsilon P e\,$ defined above makes it a $\,[\rrdaha \otimes (e
\Lambda e)^{\circ}] \rtimes G$-module (equivalently, $\,\varepsilon P e\,$ is a
$G$-equivariant $\rrdaha$-$e\Lambda e$-bimodule).
\end{proposition}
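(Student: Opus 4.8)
The plan is to verify that the three structural requirements for $\,\varepsilon P e\,$ to be a $\,[\rrdaha \otimes (e\Lambda e)^{\circ}] \rtimes G$-module hold, namely: that the left $\rrdaha$-action and the right $e\Lambda e$-action are $G$-equivariant, and that the two actions commute (which is already guaranteed by the bimodule structure coming from Proposition~\ref{pbimod}, restricted to $\,e\Lambda e\,$ via Lemma~\ref{laq}). Equivalently, unwinding the definition of a module over a semidirect product $\,S \rtimes G\,$, I must check the single compatibility identity
$$
\Phi_P(g)\bigl(s \cdot m \cdot t\bigr) = \Phi_{\rrdaha}(g)(s) \cdot \Phi_P(g)(m) \cdot \Phi_{e\Lambda e}(g)(t)
$$
for all $\,g \in G\,$, $\,s \in \rrdaha\,$, $\,t \in (e\Lambda e)^{\circ}\,$, and $\,m \in \varepsilon P e\,$, where $\,\Phi_{e\Lambda e}\,$ is the restriction of $\,\Phi_\Lambda\,$ from Lemma~\ref{gpact2}. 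Since $G$ is generated by the torus $\,(\c^*)^2\,$ and the braid elements $\,g_1, g_2\,$, it suffices to check this identity on these generators.

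First I would fix explicit $\c$-bases adapted to the matrix description $\,\varepsilon P = \rldaha \oplus \rrdaha\,$, and track how $\,\Phi_P\,$ acts: because every automorphism in the image of $\,\Phi_H\,$ fixes $\,T_1,\dots,T_{n-1}\,$ (hence fixes the idempotents $\,\varepsilon, \varepsilon'\,$), the induced maps $\,\Phi_{\rrdaha}\,$ and $\,\Phi_P\,$ are simply the restrictions of $\,\Phi_H\,$, and are thus completely determined by Lemma~\ref{gpact1}. Next I would record how the generators $\,X, Y, i, j\,$ of $\,\Lambda\,$ act on $\,\varepsilon P\,$ through the homomorphism $\,\Theta\,$ of Proposition~\ref{pbimod}, together with the right $\,e\Lambda e\,$-action on $\,\varepsilon P e\,$. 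With these two dictionaries in hand, the verification reduces to a bookkeeping comparison: apply $\,\Phi_P(g)\,$ to a product $\,s \cdot m \cdot t\,$ and compare with the product of the separately-transformed factors, using the explicit generator formulas of Lemmas~\ref{gpact1} and \ref{gpact2}.

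The conceptual heart of the matter — and the main obstacle — is that the formulas in Lemma~\ref{gpact2} for $\,\Phi_\Lambda\,$ on $\,\{X,Y,i,j\}\,$ were chosen precisely so as to intertwine with the formulas in Lemma~\ref{gpact1} for $\,\Phi_H\,$ under the bimodule map $\,\Theta\,$. Concretely, for $\,g_1\,$ one has $\,\tau(X_1) = Y_1 X_1\,$ on the Hecke side while $\,X \mapsto XY\,$ on the $\Lambda$ side, and one must check that these transport consistently across $\,\Theta\,$, taking careful account of the opposite-algebra convention in $\,\End_H(P)^{\circ}\,$ (which reverses the order of composition) and of the auxiliary factors $\,Y^{-1}j\,$, $\,iY\,$, etc., appearing in the framing arrows. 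The genuine work is confirming that the twist in the $\,i,j\,$ components matches the twist on $\,X_1, Y_1\,$; once that single compatibility is pinned down on the braid generators $\,g_1, g_2\,$ and seen to be automatic on the torus $\,(\c^*)^2\,$ (where everything merely rescales), the braid relation and order-six relation are inherited from Lemmas~\ref{gpact1} and \ref{gpact2}, and the proposition follows. As the transformations on the individual generators are already displayed in those lemmas, the remaining step is a direct—if somewhat tedious—check on generators, which I would carry out and then omit from the text.
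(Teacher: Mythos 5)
Your proposal is correct and follows essentially the same route as the paper's proof: both note that the actions on $\rrdaha$ and on $\,\varepsilon P e \cong \rldaha\,$ are restrictions of the algebra automorphisms $\Phi_H$ (so equivariance of the left action and of multiplication is automatic), thereby reducing everything to the single claim that $\,\Theta\colon (e\Lambda e)^{\circ} \to \lldaha\,$ intertwines the $G$-actions of Lemmas~\ref{gpact1} and~\ref{gpact2}. The paper, like you, disposes of that last compatibility as a routine check on generators left to the reader.
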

\begin{proof}
We need to verify that, for all $\,h \in \rrdaha $, $\,m \in \varepsilon P e$, $\,x 
\in e \Lambda e $ and $ \sigma \in G$,
\begin{equation} 
\la{equivar1}
(\sigma.h)(\sigma.m)(\Theta(\sigma.x))= \sigma.(h m \Theta(x))
\text{. }
\end{equation} 
Note that $\,\varepsilon P e\,$ may be identified with the direct summand $\rldaha$ 
of $\varepsilon P$. The $G$-action on $\rldaha$ as well as that on
$\rrdaha$ are obtained by restricting $\Phi_{H}$. It follows that $\,\sigma(h.m)=
(\sigma.h)(\sigma.m)\,$ for all $\,\sigma \in G$,$\,h \in
\rrdaha\,$ and $\,m \in \varepsilon P e\,$. Therefore, it suffices 
to verify \eqref{equivar1} with $\,h=1\,$. For this, it suffices to show that 
$\,\Theta: (e \Lambda e)^{\circ} \to \lldaha\,$ is a $G$-module homomorphism. 
This boils down to a trivial calculation which we leave to the reader.
\end{proof}

\begin{proof}[Proof of Proposition~\ref{Teqv}]
To simplify the notation, we set 
$\,\Gamma := \rrdaha \otimes (e\Lambda e)^{\circ}\,$ and 
$\, \Gamma_A := \rrdaha \otimes A_q^{\circ} \,$. Note that $G$ acts 
naturally on $A_q$. Further, the algebra homomorphism 
$\,e \Lambda e \rightarrow A_q\,$ given by Lemma~\ref{laq} is $G$-equivariant.
Hence, the induced homomorphism $\,\Gamma \to \Gamma_A\,$ is
$G$-equivariant. Now, for any $G$-equivariant $ \Gamma$-module $M$,
we have an isomorphism $ \Gamma_A$-modules
$$
M \otimes_{e \Lambda e} A_q \cong 
\Gamma_A \otimes_{\Gamma} M \cong
(\Gamma_A \rtimes G) \otimes_{\Gamma \rtimes G} M\ .
$$ 
It follows that $\, M \Lotimes_{e \Lambda e} A_q \cong (\Gamma_A \rtimes G) 
\Lotimes_{\Gamma \rtimes G} M\,$ in the derived category of $\rrdaha$-$A_q$-bimodules.
Hence $\,M \Lotimes_{e\Lambda e} A_q\,$ is a $G$-equivariant $\rrdaha$-$A_q$-bimodule. 
By Proposition~\ref{gpact3}, the $\rrdaha$-$A_q$-bimodule $\,\tilde{P}:= 
\varepsilon P e \Lotimes A_q\,$ is $G$-equivariant. Identify 
$\,\rrdaha \cong \O(\CC_n^q) \,$ as in Proposition~\ref{oblom}, and let $\,\mu: 
\O \otimes \tilde{P} \otimes A_q \to \tilde{P}\,$ denote the structure map
of the bimodule $ \tilde{P} $ with this identification. Now, 
for any $\,\sigma \in G\,$ and for any maximal ideal $\,\m \subset \O\,$, 
the commutatuve diagram
$$
\begin{diagram}[small, tight]
\O \otimes \tilde{P} \otimes A_q &  \rTo^{\mu} & \tilde{P} \\
\dTo^{\sigma\otimes \sigma \otimes \sigma}     &             & \dTo_{\sigma}  \\
\O \otimes \tilde{P} \otimes A_q &  \rTo^{\mu} & \tilde{P} 
\end{diagram}
$$
induces 
$$
\begin{diagram}[small, tight]
(\O/\m) \otimes [\,(\O/\m) \otimes_{\O} \tilde{P}\,] \otimes A_q &  \rTo^{\mu} & (\O/\m) \otimes_\O \tilde{P} \\
\dTo^{\sigma\otimes \sigma \otimes \sigma}     &             & \dTo_{\sigma}  \\
(\O/\m)_\sigma \otimes [\,(\O/\m)_\sigma \otimes_{\O} \tilde{P}\,] \otimes A_q &  \rTo^{\mu} & (\O/\m)_\sigma \otimes_\O \tilde{P} 
\end{diagram}
$$
where $\,(\O/\m)_\sigma \,$ denotes the twisting by $ \sigma $ of the $ \O$-module $\, \O/\m \cong \c \,$.
It follows that the map $\,\omega_n:\CC^q_n \rightarrow \RR(A_q)\,$ is $G$-equivariant. 
Proposition~\ref{Teqv} follows once we note that, by Theorem~\ref{Pic}, $\,\Pic(A_q) \cong G/\Z^2 \,$, 
and the $G$-action on $\CC^q_n$ descends to the $ \Pic(A_q)$-action on $\bCC^q_n$.
\end{proof}
\section{Appendix: Comparison with Noncommutative Tori in the Smooth Case}
\la{S6}
In the following table, we compare the properties of algebraic and smooth
noncommutative tori. In the algebraic case, most ring-theoretic and
homological properties follow from the fact that $A_q$ is a simple hereditary
domain (see \cite{J}); the classification of projectives and Morita
classification are results of this paper, and the computation of
Hochschild and cyclic homology can be found in \cite{Wa}. In the
smooth case, the description of projective modules and Morita classification
can be found in \cite{R2}, the Picard group of $ \AA_q $ is computed in \cite{K},
and results on Hochschild, cyclic homology, and cohomological dimension follow
from \cite{C}.
\begin{table}[ht]
\caption{Algebraic vs Smooth Noncommutative Tori} \centering
\setlength{\extrarowheight}{4pt}
\begin{tabularx}{\linewidth}%
{|c|>{\setlength\hsize{1\hsize}}X%
|>{\setlength\hsize{1\hsize}}X|} \hline Property & Algebraic torus
$A_q$ & Smooth torus ${\mathcal {A}}_\alpha$\newline ($q=e^{2\pi
i\alpha}$)
\\
\hline
Cancellation property& Any f.g. projective module of Rank ($\geq 2$) is free. & Stably isomorphic f.g. projective
modules are isomorphic.\\
\hline
f.g. projective modules& Any f. g. projective module is either free or isomorphic to a right ideal. &Any f.g.
projective module is isomorphic to a standard module constructed by Connes. \\
\hline $K_0$ &$\mathbb{Z}$
&$\mathbb{Z}^2$\\
\hline Isomorphism class&$A_q$ is isomorphic to $A_{q'}$ if and only
if $q=q'^{\pm 1}$.&${\mathcal {A}}_\alpha$ is isomorphic to
${\mathcal {A}}_{\alpha'}$ if and only if $\alpha\pm \alpha'\in
\mathbb{Z}$.
\\
\hline Morita class& $A_q$ is Morita equivalent to $A_{q'}$ if and
only if $q=q'^{\pm1}$. A unital algebra is Morita equivalent to
$A_q$ if and only if $A$ is isomorphic to $M_n(A_q)$ for some $n\in
\mathbb{N}$ or $\End_{A_q}(R)$ for some right ideal $R$ of $A_q$. &
A unital $C^*$-algebra is Morita equivalent to ${\mathcal
{A}}_\alpha$ if and only if it is isomorphic $M_n({\mathcal
{A}}_{\alpha'})$ for some $n\in \mathbb{N}$, and
$\alpha=\frac{a\alpha'+b}{c\alpha'+d}$, where
$\left(\begin{array}{ll}a&b\\ c&d\end{array}\right)\in \GL(2,
\mathbb{Z})$.
\\
\hline Outer automorphism& $({\mathbb {C}^*}/{\mathbb{Z}})^2\rtimes
\SL(2, \mathbb{Z})$& $({\mathbb {C}^*}/{\mathbb{Z}})^2\rtimes \SL(2,
\mathbb{Z})$
\\
\hline Picard group&$\operatorname{Out}(A_q)$&  If $\alpha$ is not
quadratic, $\Pic({\mathcal {A}}_\alpha)=\operatorname{Out}({\mathcal
{A}}_\alpha)$; if $\alpha$ is quadratic, $\Pic({\mathcal
{A}}_\alpha)=(\operatorname{Out}({\mathcal {A}}_\alpha))\rtimes
\mathbb{Z}.$
\\
\hline Hochschild homology&$HH_n(A_q)$\newline
=$\left\{\begin{array}{ll} \mathbb{C}& n=0,2\\ \mathbb{C}^2&n=1\\
0&\text{otherwise}.\end{array}\right.$& $HH_n({\mathcal
{A}}_\alpha)$ (when $\alpha$ satisfies a diophantine
condition)\newline =$\left\{\begin{array}{ll} \mathbb{C}& n=0,2\\
\mathbb{C}^2&n=1\\ 0&\text{otherwise}.\end{array}\right.$\newline
When $\alpha$ does not satisfy a diophantine condition,
$HH_0({\mathcal{A}}_\alpha)$ and $HH_1({\mathcal {A}}_\alpha)$ are
infinitely dimensional.
\\
\hline Cyclic homology& $HP_{\text{odd}}(A_q)={\mathbb
{C}}^2$\newline $HP_{\text{even}}(A_q)={\mathbb {C}}^2$ &
$HP_{\text{odd}}({\mathcal {A}}_\alpha)={\mathbb {C}}^2$\newline
$HP_{\text{even}}({\mathcal {A}}_\alpha)={\mathbb {C}}^2$
\\
\hline Cohomological dimension & 1 &2
\\
\hline
\end{tabularx}
\end{table}

\end{document}